\newcommand{\R}{\mathbb{R}}
\newcommand{\N}{\mathbb{N}}
\newcommand{\bmat}[1]{\begin{bmatrix}#1\end{bmatrix}}
\newcommand{\norm}[1]{\left\lVert{#1}\right\rVert}
\newcommand{\ip}[2]{\left\langle #1, #2 \right\rangle}
\newcommand{\mcl}[1]{\mathcal{ #1}}
\newcommand{\mbf}[1]{\mathbf{ #1}}
\newtheorem{thm}{Theorem}
\newtheorem{lem}[thm]{Lemma}
\begin{document}

\title{\LARGE \bf {Estimator-Based Output-Feedback Stabilization of Linear Multi-Delay Systems using SOS \thanks{This work was supported by the National Science Foundation under grants No. 1538374 and 1739990, National Natural Science Foundation of China No. 61825304, 61751309, 61673335 and China Scholarship Council No. 201808130194.
}
}
}

\author{Shuangshuang Wu\thanks{S. Wu is with the Institute of Electrical Engineering, Yanshan University, Qinhuangdao, 066004, China. e-mail: {\tt \small ssw\underline{\hbox to 2mm{}}0538\underline{\hbox to 2mm{}}ysu@163.com } }, Matthew~M.~Peet%
\thanks{M. Peet is with the School for the Engineering of Matter, Transport and Energy, Arizona State University, Tempe, AZ, 85298 USA. e-mail: {\tt \small mpeet@asu.edu } }, Changchun Hua\thanks{C. Hua is with the Institute of Electrical Engineering, Yanshan University, Qinhuangdao, 066004, China. e-mail: {\tt \small cch@ysu.edu.cn } }
}

\maketitle

\begin{abstract}
In this paper, we investigate the estimator-based output feedback control problem of multi-delay systems. This work is an extension of recently developed operator-value LMI framework for infinite-dimensional time-delay systems. Based on the optimal convex state feedback controller and generalized Luenberger observer synthesis conditions we already have, the estimator-based output feedback controller is designed to contain the estimates of both the present state and history of the state. An output feedback controller synthesis condition is proposed using SOS method, which is expressed in a set of LMI/SDP constraints.  The simulation examples are displayed to demonstrate the effectiveness and advantages of the proposed results.
\end{abstract}

\section{Introduction}
Time delay widely exists in natural and engineered systems, often as a source of instability. Many works have been done on the study and control of time-delay systems during the last decades \cite{b1,b2}, mainly focusing on stability analysis, such as \cite{b6} and \cite{b7}. Despite the considerable advances that have been made in the area of stability analysis, the problem of stabilization of time-delay systems has been relatively neglected~\cite{b2,b8}. The primary problem in feedback stabilization of time-delay systems is the bilinearity between the controller and the Lyapunov certificate of stability. This bilinearity implies that combining parameterized controllers with standard approaches to Lyapunov-Krasovskii functional construction will result in Bilinear Matrix Inequalities -- a problem for which no efficient optimization algorithms exist. Faced with this bilinearity, some papers use iterative methods to alternately optimize the Lyapunov functional and then the controller as in \cite{b9,b10}. However, this iterative approach is not guaranteed to converge. Recently, however, duality-based methods have been proposed within the SOS-based operator-theoretic framework -- resulting in an LMI-based solution to the problem of $H_\infty$-optimal full-state-feedback control of multi-delay systems~\cite{b11}. The primary disadvantage of the full-state feedback controllers proposed in~\cite{b11} is that they assume accurate knowledge of all states of the system and moreover knowledge of the history of these states. Specifically, the controllers have the form
\begin{equation}
u(t)=K_0x(t)+\sum_i^K K_{1i}x(t-\tau_i)+\sum_i^K\int_{-\tau_i}^0K_{2i}(s)x(t+s)ds \label{eqn:controller}
\end{equation}
where the $H_\infty$-optimal controller gains $K_0, K_{1i}, K_{2i}$ are polynomials chosen to minimize the closed-loop $L_2$-gain bound $\gamma_1:=\text{sup}_{\omega\in L_2} \frac{\parallel z \parallel_{L_2}}{\parallel\omega\parallel_{L_2}}$. This formulation specifically precludes output-feedback controllers of the form $u(t)=Ky(t)$ or even $u(t)=Kx(t)$. In most practical cases such detailed measurements are not available.

The question of how to use measured outputs to reconstruct the full state is that of estimator design and is itself an area of active study (e.g. the Smith predictor can be thought of as an estimator using delayed output signals \cite{b16}). The $H_\infty$-optimal estimator design problem for multi-delay systems was itself directly addressed in the SOS-operator framework in \cite{b12}, wherein the observer is a simulated PDE running parallel to the real system which corrects both the present states and the history of the states. This observer minimizes an $L_2$-gain bound on the effect of disturbances on a regulated error signal.

In this paper, we propose a framework for using controllers of the form in Eqn.~\eqref{eqn:controller} where the controller acts not on the full state, but the state estimate derived from a dynamic estimator constructed using the algorithm proposed in~\cite{b12}. Specifically, the closed-loop dynamics have the form

\vspace{-0.4cm}
{\small{
\begin{align}
&\dot{x}(t)=A_0x(t)+\sum_i A_{i}x_{i}(t-\tau_i)+B_1w(t)+B_2u(t)\notag \\
 &\dot{\hat{x}}(t)=A_0\hat{x}(t)+\sum_i A_{i}\hat{\phi}_{i}(t,-\tau_i)+L_1b_0(t)\notag\\
 &\quad +\sum_{i}L_{2i}b_i(t,-\tau_i)+\sum_{i}\int_{-\tau_i}^{0}L_{3i}(s)b_i(t,s)ds\notag\\
&\partial_t\hat{\phi}(t,s)=\partial_s\hat{\phi}(t,s)+L_4(s)b_0(t)+\sum_{j}L_{5ij}b_j(t,-\tau_j)\notag\\
&\quad +L_{6i}(s)b_i(t,s)+\sum_j\int_{-\tau_i}^0L_{7ij}(s,\theta)b_j(t,\theta)d\theta\notag\\
&\hat{\phi}_i(t,0)=\hat{x}(t) \quad b_i(t,s)=C_2\hat{\phi}_i(t,s)-y(t+s)\notag\\
 &b_0(t)= C_2\hat{x}(t)-y(t)\notag\\
 &u(t)=K_0\hat x(t)+\sum_i K_{1i}\hat x(t-\tau_i)+\sum_i\int_{-\tau_i}^0K_{2i}(s)\hat x(t+s)ds\notag\\
 &y(t)=C_2x(t)+D_2w(t)\notag\\
&z(t)=C_{10}x(t)+\sum_i C_{1i}x_i(t-\tau_i)+D_1w(t)\notag\\
&z_e(t)=C_{30}e(t)+\sum_i C_{3i}e_i(t,-\tau_i)+D_3w(t)\label{eqn:nominal}
\end{align}}}
where $x(t)\in \R^n$ is the state, $\hat{x}(t)\in \R^n$ is the estimate of state, $\hat{\phi}(t,s) \in \R^n$ is the estimate of history of state, $w\in L_2^r$ is an external disturbance input, $u(t)\in \R^m$ is the actuated input, $y(t)\in \R^{q}$ is the
measured output, $z(t)\in \R^{p}$ is the regulated output, $z_e(t)\in \R^{p_1}$ is the estimated error of regulated output (not need to be $z(t)$ defined above). The delays $\tau_i>0$ for $i\in[1,\ldots,K]$ are ordered by increasing magnitude and $A_0, A_{i}, B_1, B_2, C_{10}, C_{1i}, C_2, C_{30}, C_{3i}, D_1, D_2, D_3$ are constant matrices with appropriate dimensions. We assume $x(0)=\hat{x}(t)=0$ for all $s\in[-\tau_K,0]$. The gains $K_0, K_{1i}, K_{2i}$ come from~\cite{b11} and the gains $L_0$, $L_{1i}$,  $L_{2i}$,  $L_{3i}$,  $L_{4i}$,  $L_{5ij}$ come from~\cite{b12}. By exploiting the properties of the gains and examining the dynamics of the closed-loop system, we show that the resulting dynamics are stable and establish a bound on the $H_\infty$-gain of the resulting closed-loop system. We furthermore propose a scheme for real-time numerical implementation of the observer-based controller and use numerical simulation to show that the resulting closed-loop system achieves internal stabilization.
\vspace{-0.3cm}
\subsection{Notation}
Shorthand notation used throughout this paper includes the Hilbert spaces $L_2^m[X]:=L_2(X;\R^m)$ of square integrable functions from $X$ to $\R^m$ and $W_2^m[X]:=W^{1,2}(X;\R^m)=H^1(X;\R^m)=\{x:x,\dot{x}\in L_2^m[X]\}$. We use $L_2^m, W_2^m$ when domains are clear from context. We also use the extensions $L_2^{n\times m}[X]:=L_2(X;\R^{n\times m})$ and $W_2^{n\times m}[X]:=W^{1,2}(X;\R^{n\times m})$ for matrix-valued functions. $S^n\subset \R^n\times n$ denotes the symmetric matrices. An operator $\mathcal{P}:Z\rightarrow Z$ is positive on a subset $X$ of Hilbert space $Z$ if $\left< x, \mathcal{P}x\right>\geq 0$ for all $x\in X$. $\mathcal{P}$ is coercive on $X$ if $\left< x, \mathcal{P}x\right>\geq\epsilon\lVert x \rVert^2_Z$ for some $\epsilon>0$ for all $x\in X$. Given an operator $\mathcal{P}:Z\rightarrow Z$ and a set $X\rightarrow Z$, we use the shorthand $\mathcal{P}(X)$ to denote the image of $\mathcal{P}$ on subset $X$. $I_n\in S^n$ denotes the identity matrix. $0_{n\times m}\in \R^{n\times m}$ is the matrix of zeros matrix with shorthand $0_n:=0_{n\times n}$. We will occasionally denote the intervals $T_i:=[-\tau_i,0]$. For a natural number, $K\in N$, we adopt the index shorthand notation which denotes $[K]={1,\cdots, K}$. The symmetric completion of a matrix is denoted $*^T$.

\section{Previous Work on State Estimation and State-Feedback Control of DPS}
In this section, we consider the a general class of distributed-parameter system (DPS) given as
\begin{align}
\dot{\mbf{x}}(t)&=\mcl{A}\mbf{x}(t)+\mcl{B}_1\omega(t)+\mcl{B}_2u(t)\quad \mbf x(0)=0\notag\\z(t)&=\mcl{C}_1\mbf{x}(t)+\mcl{D}_1\omega(t)\notag\\ \mbf{y}(t)&=\mcl{C}_2\mbf{x}(t)+\mcl{D}_2\omega(t) \label{eqn:DPS_FSC}
\end{align}
where $\mcl{A}:X \rightarrow Z$, $\mcl{B}_1:\R\rightarrow Z$, $\mcl{B}_2:U\rightarrow Z$, $\mcl{C}_1:X\rightarrow \R$, $\mcl{C}_2:X\rightarrow Y$, $\mcl{D}_1:\R\rightarrow \R$ and $\mcl{D}_2:\R\rightarrow Y$.
\vspace{-0.2cm}
\subsection{Full State feedback controller design}
\begin{thm} \cite{b11}
Suppose $\mcl{P}_1$ is a bounded, coercive linear operator $\mcl{P}_1:X\rightarrow X$ with $\mcl{P}_1(X)=X$ and which is self-adjoint with respect to the $Z$ inner product. Then $\mcl{P}_1^{-1}$ exists; is bounded; is self-adjoint; $\mcl{P}_1^{-1}:X\rightarrow X$; and $\mcl{P}_1^{-1}$ is coercive.
\end{thm}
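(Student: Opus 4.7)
My plan is to verify the four assertions in sequence, letting coercivity do essentially all of the work so that the open mapping theorem is not needed. Existence of $\mathcal{P}_1^{-1}$ as a map $X\to X$ is immediate: coercivity forces injectivity (from $\mathcal{P}_1 x = 0$ one gets $\epsilon\lVert x\rVert_Z^2 \le \langle x,\mathcal{P}_1 x\rangle_Z = 0$), and surjectivity onto $X$ is supplied by the hypothesis $\mathcal{P}_1(X)=X$, so $\mathcal{P}_1$ is a linear bijection of $X$ whose set-theoretic inverse is a well-defined linear map $X\to X$.

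Boundedness of $\mathcal{P}_1^{-1}$ follows directly from coercivity via Cauchy--Schwarz: writing $y = \mathcal{P}_1 x$, the chain $\lVert y\rVert_Z\,\lVert x\rVert_Z \ge \langle x,\mathcal{P}_1 x\rangle_Z \ge \epsilon\lVert x\rVert_Z^2$ gives $\lVert \mathcal{P}_1^{-1} y\rVert_Z \le \epsilon^{-1}\lVert y\rVert_Z$, so $\lVert\mathcal{P}_1^{-1}\rVert \le 1/\epsilon$. Self-adjointness is then a one-line substitution: for $u,v\in X$, use surjectivity to write $u=\mathcal{P}_1\tilde u$, $v=\mathcal{P}_1\tilde v$, and apply self-adjointness of $\mathcal{P}_1$ to get
\[
\langle u, \mathcal{P}_1^{-1} v\rangle_Z = \langle \mathcal{P}_1\tilde u,\tilde v\rangle_Z = \langle \tilde u,\mathcal{P}_1\tilde v\rangle_Z = \langle \mathcal{P}_1^{-1} u, v\rangle_Z.
\]
Coercivity of $\mathcal{P}_1^{-1}$ follows by combining coercivity and boundedness of $\mathcal{P}_1$: again writing $y=\mathcal{P}_1 x$,
\[
\langle y,\mathcal{P}_1^{-1}y\rangle_Z = \langle \mathcal{P}_1 x, x\rangle_Z \ge \epsilon\lVert x\rVert_Z^2 \ge \frac{\epsilon}{\lVert\mathcal{P}_1\rVert^2}\,\lVert y\rVert_Z^2,
\]
with the last inequality coming from $\lVert y\rVert_Z \le \lVert\mathcal{P}_1\rVert\,\lVert x\rVert_Z$.

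I do not anticipate a genuine obstacle; the one thing worth flagging is that the inner product and the self-adjointness hypothesis refer to the ambient space $Z$, while $\mathcal{P}_1$ and $\mathcal{P}_1^{-1}$ act between copies of the subset $X$. It is therefore important that the stated surjectivity is precisely onto $X$ (and not merely into $Z$), because otherwise $\mathcal{P}_1^{-1}y$ could escape $X$ and the substitution used in the self-adjointness step would be inadmissible. Beyond this bit of bookkeeping, the ``hard part'' is simply writing the four checks cleanly and in the right order.
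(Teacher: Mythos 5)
Your proof is correct and complete: injectivity from coercivity, surjectivity from the hypothesis $\mcl{P}_1(X)=X$, boundedness via Cauchy--Schwarz, self-adjointness by the substitution $u=\mcl{P}_1\tilde u$, $v=\mcl{P}_1\tilde v$, and coercivity of the inverse from boundedness of $\mcl{P}_1$ --- and your remark about needing surjectivity onto $X$ (not merely into $Z$) is exactly the right point to flag. The paper itself gives no proof, citing reference [b11], but your argument is the standard one used there, so there is nothing substantive to contrast.
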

\begin{thm} \cite{b11}
Suppose there exists a scalar $\epsilon_1>0$, an operator $\mcl{P}_1:Z\rightarrow Z$ which satisfies the conditions of Theorem 1, and an operator $\mcl{H}:X \rightarrow U$ such that

\vspace{-0.4cm}
{\small{
\begin{align}
&\left<\mcl{AP}_1\mbf{h},\mbf{h}\right>_Z+\left<\mbf{h},\mcl{AP}_1\mbf{h}\right>_Z+\left<\mcl{B}_2\mcl{H}\mbf{h},\mbf{h}\right>_Z+\left<\mbf{h},\mcl{B}_2\mcl{H}\mbf{h}\right>_Z\notag\\
&+\left<\mcl{B}_1\omega,\mbf{h}\right>_Z+\left<\mbf{h},\mcl{B}_1\omega\right>_Z-\gamma_1\lVert\omega\rVert^2-\gamma_1\lVert\upsilon\rVert^2+\upsilon^T(\mcl{C}_1\mcl{P}\mbf{h})\notag\\
&+(\mcl{C}_1\mcl{P}\mbf{h})^T\upsilon+\upsilon^T(\mcl{D}_2\mcl{H}\mbf{h})+(\mcl{D}_2\mcl{H}\mbf{h})^T\upsilon+\upsilon^T(\mcl{D}_1\omega)\notag\\
&+(D_1\omega)^T\upsilon\leq-\epsilon_1\lVert \mbf{h}\rVert^2
\end{align}}} for all $\mbf{h}\in X$, $\omega\in \R^r$ and $\upsilon\in \R^p$. Then if $\omega$ and $z$ satisfy Eqn.~\eqref{eqn:DPS_FSC} and  $u(t)=\mcl{K}\mbf{x}(t)$ where $\mcl{K}=\mcl{HP}_1^{-1}$ we have $\lVert z \rVert_{L_2}\leq\gamma_1\lVert\omega\rVert_{L_2}$.
\end{thm}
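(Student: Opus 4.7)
The plan is to construct a Lyapunov/storage functional directly from $\mcl{P}_1$ and extract an $H_\infty$ dissipation inequality from the hypothesis. By Theorem 1, $\mcl{P}_1^{-1}$ exists, is self-adjoint on $Z$, maps $X$ onto $X$, and is coercive, so
$$V(\mbf{x}) := \langle \mbf{x},\, \mcl{P}_1^{-1}\mbf{x}\rangle_Z$$
is a valid nonnegative storage function satisfying $V(\mbf{x}) \geq \epsilon\|\mbf{x}\|_Z^2$ for some $\epsilon>0$, with $V(0)=0$.

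The next step is to differentiate $V$ along closed-loop trajectories of \eqref{eqn:DPS_FSC} with $u = \mcl{K}\mbf{x} = \mcl{H}\mcl{P}_1^{-1}\mbf{x}$. Using self-adjointness of $\mcl{P}_1^{-1}$, and then performing the change of variables $\mbf{h} := \mcl{P}_1^{-1}\mbf{x}$ (which is a bijection on $X$ by Theorem 1, so $\mbf{x}=\mcl{P}_1\mbf{h}$), the derivative $\dot V$ matches term-for-term the six $\langle\cdot,\cdot\rangle_Z$ pairings that appear in the first two lines of the hypothesis (those involving $\mcl{A}\mcl{P}_1\mbf{h}$, $\mcl{B}_2\mcl{H}\mbf{h}$, and $\mcl{B}_1\omega$).

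The remaining terms, which depend on the free auxiliary variable $\upsilon\in\R^p$ and on $\omega$, are then reduced to the desired $L_2$-gain form by a Schur-complement choice of $\upsilon$. Because the hypothesis holds for \emph{every} $\upsilon$, I would set $\upsilon = z/\gamma_1$, where $z = \mcl{C}_1\mcl{P}_1\mbf{h} + \mcl{D}_2\mcl{H}\mbf{h} + \mcl{D}_1\omega$ is the closed-loop regulated output written in $\mbf{h}$-coordinates. The six cross terms of the form $2\upsilon^T(\cdot)$ then collapse to $\tfrac{2}{\gamma_1}\|z\|^2$, while $-\gamma_1\|\upsilon\|^2 = -\tfrac{1}{\gamma_1}\|z\|^2$, for a net contribution of $\tfrac{1}{\gamma_1}\|z\|^2$. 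Combined with the $\dot V$ identification, the hypothesis becomes the pointwise dissipation inequality
$$\dot V(t) + \tfrac{1}{\gamma_1}\|z(t)\|^2 - \gamma_1\|\omega(t)\|^2 \;\leq\; -\epsilon_1\|\mbf{h}(t)\|_X^2 \;\leq\; 0.$$

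Integrating from $0$ to $T$, using $\mbf{x}(0)=0$ (so $V(0)=0$) and $V(T)\geq 0$, and sending $T\to\infty$ yields $\|z\|_{L_2}^2 \leq \gamma_1^2\|\omega\|_{L_2}^2$, which is the claim. The main obstacle is not the algebra—once the substitution $\upsilon = z/\gamma_1$ is recognized as the Schur complement that forces the bound, everything else follows from Theorem 1—but rather justifying the chain rule used to compute $\dot V$ along solutions of the DPS. In the operator setting this typically requires restricting to classical solutions with initial data in $\dom(\mcl{A})$ and the closed-loop generator, and then extending by density to mild solutions; this is standard in the SOS-operator framework of \cite{b11,b12} but is the one non-routine point worth flagging.
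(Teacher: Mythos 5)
Your proposal is correct and follows essentially the same route the paper relies on: the paper cites \cite{b11} for this theorem rather than proving it, but the identical argument—storage function $V=\langle \mbf{x},\mcl{P}_1^{-1}\mbf{x}\rangle_Z$, the substitution $\mbf h=\mcl P_1^{-1}\mbf x$, the choice $\upsilon=z/\gamma_1$ collapsing the cross terms to $\tfrac{1}{\gamma_1}\|z\|^2$, and integration from $V(0)=0$—is exactly what appears in the paper's own proof of Theorem~\ref{thm:abstract}. Your flag about justifying $\dot V$ along mild solutions is a fair technical caveat that the paper also leaves implicit.
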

\vspace{-0.2cm}
\subsection{Estimator design}
In \cite{b12}, a $H_\infty$ optimal estimator based on the traditional Luenberger structure is given for Eqn.~\eqref{eqn:DPS_FSC}, which can correct both the present states and history of the states and give a real-time estimate of the history of states. This estimator has the following dynamics
\begin{align}
\dot{\hat{\mbf{x}}}(t)=\mcl{A}\hat{\mbf{x}}(t)+\mcl{L}(\mcl{C}_2\hat{\mbf{x}}(t)-\mbf{\mathrm{y}}(t)) \label{eqn:DPS_ED1}
\end{align}
for a given operator $\mcl{L}:Y\rightarrow Z$. By defining $\mbf{e}(t)=\hat{\mbf{x}}(t)-\mbf{x}(t)$, one obtains the error dynamics as
\begin{align}
\mbf{\dot{e}}(t)&=(\mcl{A}+\mcl{L}\mcl{C}_2)\mbf{e}(t)-(\mcl{B}_1+\mcl{L}\mcl{D}_2)\omega(t)\notag\\
z_e(t)&=\mcl{C}_3\mbf{e}(t)+\mcl{D}_3\omega(t)\quad \mbf{e}(0)=0 \label{eqn:DPS_ED2}
\end{align}
where $\mcl{C}_3:X\rightarrow \R$ and $\mcl{D}_3:\R\rightarrow \R$.

\begin{thm}\cite{b12}
Suppose there exist a scalar $\epsilon_2>0$ and bounded linear operators $\mcl{P}_2:Z\rightarrow Z$ and $\mcl{Z}: Y\rightarrow Z$ such that $\mcl{P}_2$ is coercive and

\vspace{-0.3cm}
{\small{
\begin{align}
&\left<(\mcl{P}_2\mcl{A}+\mcl{ZC}_2)\mbf{e},\mbf{e}\right>_Z+\left<\mbf{e},(\mcl{P}_2\mcl{A}+\mcl{ZC}_2)\mbf{e}\right>_Z\notag\\&-\left<\mbf{e},(\mcl{P}_2\mcl{B}_1+\mcl{ZD}_2)\omega\right>_Z-\left<(\mcl{P}_2\mcl{B}_1+\mcl{ZD}_2)\omega,\mbf{e}\right>_Z\notag\\&-\gamma_2\lVert\omega\rVert^2\notag-\gamma_2\lVert\upsilon_e\rVert^2+\left<\upsilon_e,\mcl{C}_3\mbf{e}\right>+\left<\mcl{C}_3\mbf{e},\upsilon_e\right>
\notag\\&+\left<\upsilon_e,\mcl{D}_3\omega\right>+\left<\mcl{D}_3\omega,\upsilon_e\right>\leq-\epsilon_2\lVert \mbf{e}\rVert^2\quad
\end{align}}}
for all $\mbf{e}\in X$, $\omega\in \R^r$ and $\upsilon_e\in \R^{p_1}$. Then $\mcl{P}_2^{-1}$ is a bounded linear operator and for $\mcl{L}=\mcl{P}_2^{-1}\mcl{Z}$, the solution of Eqn.~\eqref{eqn:DPS_ED2} satisfies
$\lVert z_e\rVert_{L_2}\leq\gamma_2\lVert \omega\rVert_{L_2}$.
\end{thm}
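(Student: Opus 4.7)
The plan is a standard dissipativity / storage-function argument, using $V(\mbf{e}) := \langle \mbf{e}, \mcl{P}_2 \mbf{e}\rangle_Z$ as the Lyapunov functional and treating the auxiliary vector $\upsilon_e$ in the hypothesis as a Schur-complement slot for the regulated output $z_e$. Step one is to invoke Theorem~1 (applied to $\mcl{P}_2$ in place of $\mcl{P}_1$) to conclude that $\mcl{P}_2^{-1}:Z\to Z$ exists and is bounded, so the observer gain $\mcl{L} := \mcl{P}_2^{-1}\mcl{Z}$ is a well-defined bounded operator satisfying the key identity $\mcl{P}_2 \mcl{L} = \mcl{Z}$. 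This identity is exactly what lets me rewrite $\mcl{P}_2(\mcl{A}+\mcl{L}\mcl{C}_2)$ as $\mcl{P}_2\mcl{A} + \mcl{Z}\mcl{C}_2$ and $\mcl{P}_2(\mcl{B}_1+\mcl{L}\mcl{D}_2)$ as $\mcl{P}_2\mcl{B}_1 + \mcl{Z}\mcl{D}_2$ in the computation to follow.

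Step two is to differentiate $V$ along trajectories of the error dynamics~\eqref{eqn:DPS_ED2} and, using self-adjointness of $\mcl{P}_2$ (inherited from Theorem~1) together with the identity above, verify that
\begin{align*}
\dot V &= \langle (\mcl{P}_2\mcl{A}+\mcl{Z}\mcl{C}_2)\mbf{e},\mbf{e}\rangle_Z + \langle\mbf{e},(\mcl{P}_2\mcl{A}+\mcl{Z}\mcl{C}_2)\mbf{e}\rangle_Z \\
&\quad - \langle (\mcl{P}_2\mcl{B}_1+\mcl{Z}\mcl{D}_2)\omega,\mbf{e}\rangle_Z - \langle \mbf{e},(\mcl{P}_2\mcl{B}_1+\mcl{Z}\mcl{D}_2)\omega\rangle_Z,
\end{align*}
which reproduces exactly the first four inner-product terms of the hypothesized LMI. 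Step three is to eliminate $\upsilon_e$: since the LMI is assumed to hold for every $\upsilon_e\in\R^{p_1}$, I specialize to $\upsilon_e = z_e/\gamma_2 = (\mcl{C}_3\mbf{e}+\mcl{D}_3\omega)/\gamma_2$ (equivalently, complete the square in $\upsilon_e$), collapsing the $-\gamma_2\|\upsilon_e\|^2$ term and the four $\upsilon_e$-cross terms into $\gamma_2^{-1}\|z_e\|^2$. The hypothesis then reduces to the familiar $H_\infty$ dissipation inequality
\[
\dot V(\mbf{e}(t)) + \tfrac{1}{\gamma_2}\|z_e(t)\|^2 - \gamma_2 \|\omega(t)\|^2 \le -\epsilon_2\|\mbf{e}(t)\|^2.
\]
Integrating from $0$ to $T$, using $V(0)=0$ (since $\mbf{e}(0)=0$) and $V(T)\ge 0$ (by coercivity of $\mcl{P}_2$), and letting $T\to\infty$, yields $\|z_e\|_{L_2}^2 \le \gamma_2^2\|\omega\|_{L_2}^2$, which is the desired bound.

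The main obstacle I expect is functional-analytic rather than algebraic: verifying that $\mcl{P}_2$ satisfies the full hypotheses of Theorem~1 (self-adjointness on $Z$ and $\mcl{P}_2(X)=X$, in addition to the explicitly stated boundedness and coercivity), checking that the closed-loop error trajectory is sufficiently regular in time for $\dot V$ to be a genuine classical derivative, and justifying the adjoint manipulation $\langle \dot{\mbf e},\mcl{P}_2 \mbf{e}\rangle_Z = \langle \mcl{P}_2 \dot{\mbf{e}},\mbf{e}\rangle_Z$ used to move $\mcl{P}_2$ across the inner product. These are handled uniformly in the operator-theoretic framework of \cite{b11,b12}, and once they are in place the rest of the proof is essentially a direct substitution driven by the structure of the LMI.
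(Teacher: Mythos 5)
Your proposal is correct and is exactly the standard storage-function argument: the paper itself gives no proof of this statement (Theorem~3 is imported from \cite{b12} without proof), but your reading of the hypothesis is precisely how the paper uses it inside the proof of Theorem~\ref{thm:abstract}, where differentiating $V_2(t)=\ip{\mbf e(t)}{\mcl P_2\mbf e(t)}_Z$, using $\mcl P_2\mcl L=\mcl Z$ to reproduce the four inner-product terms, and choosing $\upsilon_e=z_e/\gamma_2$ (the maximizer of the $\upsilon_e$-quadratic) yields $\dot V_2+\tfrac{1}{\gamma_2}\norm{z_e}^2-\gamma_2\norm{\omega}^2\le-\epsilon_2\norm{\mbf e}^2$ and hence the gain bound after integration. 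The functional-analytic caveats you flag (self-adjointness of $\mcl P_2$ needed both for invertibility via Theorem~1 and for moving $\mcl P_2$ across the inner product, plus regularity of $t\mapsto\mbf e(t)$) are real but are supplied by the operator class $\mcl P_{\{P,Q_i,S_i,R_{ij}\}}$ and Lemma~5, so nothing essential is missing.
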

\section{Main results}
In this section, we give conditions under which the dynamics of the estimator-based controller is stable and give an expression for the $L_2$-gain of the closed-loop system. The conditions are given in abstract form. Later, in Theorem~\ref{thm:main}, we will given LMI-based sufficient conditions under which the conditions of Theorem~\ref{thm:abstract} is satisfied.
\vspace{-0.1cm}
\subsection{Estimator-Based Control for DPS}
Combining Eqn.~\eqref{eqn:DPS_FSC}, Eqn.~\eqref{eqn:DPS_ED1}, and Eqn.~\eqref{eqn:DPS_ED2} with $u(t)=\mcl{K}\hat{\mbf{x}}$, the closed-loop DPS dynamics are given as follows
\begin{align}
\dot{\mbf{x}}(t)&=(\mcl{A}+\mcl{B}_2\mcl{K})\mbf{x}(t)+\mcl{B}_1\omega(t)+\mcl{B}_2\mcl{K}\mbf{e}(t)\quad\notag\\
\mbf{\dot{e}}(t)&=(\mcl{A}+\mcl{L}\mcl{C}_2)\mbf{e}(t)-(\mcl{B}_1+\mcl{L}\mcl{D}_2)\omega(t)\notag\\
z(t)&=\mcl{C}_1\mbf{x}(t)+\mcl{D}_1\omega(t)\notag\\
\mbf y(t)&=\mcl{C}_2\mbf{x}(t)+\mcl{D}_2\omega(t)\label{eqn:DPS}\\
z_e(t)&=\mcl{C}_3\mbf{e}(t)+\mathcal{D}_3\omega(t)\notag
\end{align} where $\mcl{K}:Z\rightarrow U$ and $\mcl{L}:Y\rightarrow Z$. We assume $\mbf{x}(0)=\mbf e(0)=0$.

\begin{thm}\label{thm:abstract}
Suppose there exist positive scalars $\epsilon_1, \epsilon_2$, operators $\mcl H:Z  \rightarrow U$ and $\mathcal{P}_1:Z\rightarrow Z$ which satisfy the conditions of Theorem 1 with $\gamma_1$, and operators $\mathcal{P}_2:Z\rightarrow Z$,  and $\mcl{Z}:Y\rightarrow Z$ which satisfy Theorems 2 and 3 with $\gamma_2$. Then if there exists positive scalar $r$ such that
\begin{align}\ip{\bmat{\mbf h\\\mbf e}}{ \mcl{M}\bmat{\mbf h\\\mbf e}}\leq 0
\end{align} for all $\mbf h, \mbf e \in X$, where
\begin{align*}
 \mathcal{M}=\left[\begin{array}{ccc}
     -\epsilon_1I & \mathcal{B}_2\mathcal{H}\mathcal{P}_1^{-1}\\
     (\mathcal{B}_2\mathcal{H}\mathcal{P}_1^{-1})^T &-r\epsilon_2I\\
     \end{array}\right].
\end{align*}
Then for any $z(t)$, $z_e(t)$ and $w(t)$ which satisfy Eqn.~\eqref{eqn:DPS} with $\mathcal{K}=\mathcal{HP}^{-1}$ and $\mathcal{L}=\mathcal{P}^{-1}\mathcal{Z}$, we have $\lVert z \rVert_{L_2}\leq \sqrt{\gamma_1(\gamma_1+r\gamma_2)}\lVert\omega\rVert_{L_2}$ and $\lVert z_e\rVert_{L_2}\leq\gamma_2\lVert \omega\rVert_{L_2}$.
\end{thm}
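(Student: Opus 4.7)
The bound on $\|z_e\|_{L_2}$ follows for free: the error dynamics in Eqn.~\eqref{eqn:DPS} are driven only by $\omega$ and are identical to Eqn.~\eqref{eqn:DPS_ED2}, so Theorem~3 immediately yields $\|z_e\|_{L_2}\le\gamma_2\|\omega\|_{L_2}$. The substantive task is the bound on $\|z\|_{L_2}$, and my strategy is a standard composite Lyapunov/dissipativity argument. I will use the functional
\begin{equation*}
V(\mbf x,\mbf e)=\ip{\mbf x}{\mcl{P}_1^{-1}\mbf x}_Z + r\,\ip{\mbf e}{\mcl{P}_2\mbf e}_Z,
\end{equation*}
which is well-defined and coercive since $\mcl P_1^{-1}$ exists and is coercive by Theorem~1 and $\mcl P_2$ is coercive by assumption, so that $V(0)=0$ and $V(t)\ge 0$ for all $t$.

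The first step is to extract standard $H_\infty$-type dissipation inequalities from Theorems~2 and~3 by completing the square in the slack variables $\upsilon,\upsilon_e$. In Theorem~2, the inequality holds for all $\upsilon$; choosing $\upsilon=\gamma_1^{-1}z$ eliminates the $\upsilon$-terms and, after the change of variables $\mbf h=\mcl P_1^{-1}\mbf x$ together with $\mcl K=\mcl H\mcl P_1^{-1}$, produces the bound on the part of $\dot V_1:=\frac{d}{dt}\ip{\mbf x}{\mcl P_1^{-1}\mbf x}$ arising from the closed-loop-without-error dynamics. The actual trajectory has the extra input $\mcl B_2\mcl K\mbf e$, so differentiating along Eqn.~\eqref{eqn:DPS} gives
\begin{equation*}
\dot V_1 \le \gamma_1\|\omega\|^2 - \gamma_1^{-1}\|z\|^2 - \epsilon_1\|\mcl P_1^{-1}\mbf x\|^2 + 2\,\text{Re}\ip{\mcl B_2\mcl K\mbf e}{\mcl P_1^{-1}\mbf x}_Z.
\end{equation*}
Analogously, Theorem~3 applied to the autonomous error dynamics, with $\upsilon_e=\gamma_2^{-1}z_e$, yields
\begin{equation*}
\dot V_2:=\tfrac{d}{dt}\ip{\mbf e}{\mcl{P}_2\mbf e}\le \gamma_2\|\omega\|^2 - \gamma_2^{-1}\|z_e\|^2 - \epsilon_2\|\mbf e\|^2.
\end{equation*}

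The second step is to absorb the cross coupling term $2\,\text{Re}\ip{\mcl B_2\mcl K\mbf e}{\mcl P_1^{-1}\mbf x}$ using the hypothesis on $\mcl M$. Expanding the quadratic form $\ip{[\mbf h;\mbf e]}{\mcl M[\mbf h;\mbf e]}\le 0$ and specializing to $\mbf h=\mcl P_1^{-1}\mbf x$, one reads off precisely
\begin{equation*}
2\,\text{Re}\ip{\mcl B_2\mcl K\mbf e}{\mcl P_1^{-1}\mbf x} \le \epsilon_1\|\mcl P_1^{-1}\mbf x\|^2 + r\epsilon_2\|\mbf e\|^2.
\end{equation*}
Adding $\dot V_1 + r\dot V_2$ and applying this inequality cancels exactly the two dissipation margins together with the indefinite cross term, leaving
\begin{equation*}
\dot V \le (\gamma_1+r\gamma_2)\|\omega\|^2 - \gamma_1^{-1}\|z\|^2 - r\gamma_2^{-1}\|z_e\|^2.
\end{equation*}

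The final step is to integrate from $0$ to $\infty$; since $\mbf x(0)=\mbf e(0)=0$ we have $V(0)=0$ and $V(\infty)\ge 0$, so dropping the nonnegative $\|z_e\|_{L_2}^2$ contribution gives $\gamma_1^{-1}\|z\|_{L_2}^2\le(\gamma_1+r\gamma_2)\|\omega\|_{L_2}^2$, which rearranges to the claimed $\|z\|_{L_2}\le\sqrt{\gamma_1(\gamma_1+r\gamma_2)}\|\omega\|_{L_2}$. The main technical obstacle I foresee is bookkeeping: correctly performing the $\mcl P_1 \leftrightarrow \mcl P_1^{-1}$ change of variables in Theorem~2 while keeping $\mcl P_2$ in its natural form in Theorem~3, and verifying that the off-diagonal block of $\mcl M$ as defined is exactly the cross term produced by the estimator coupling $\mcl B_2\mcl K\mbf e$. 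Beyond that, self-adjointness of $\mcl P_1^{-1}$ (guaranteed by Theorem~1) and of $\mcl P_2$ are used implicitly in writing $\dot V_1,\dot V_2$ as symmetric quadratic forms, and the $S$-procedure-style maximization over $\upsilon,\upsilon_e$ must be justified as a legitimate specialization of the "for all" quantifier in the hypotheses.
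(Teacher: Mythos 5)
Your proposal is correct and follows essentially the same route as the paper: the same composite storage functional $V=\ip{\mbf x}{\mcl P_1^{-1}\mbf x}+r\ip{\mbf e}{\mcl P_2\mbf e}$, the same specialization of the slack variables ($\upsilon=\gamma_1^{-1}z$, which the paper misprints as $\tfrac{1}{\gamma_2}z$), the same absorption of the cross term $2\,\mathrm{Re}\ip{\mcl B_2\mcl K\mbf e}{\mbf h}$ via the $\mcl M\le 0$ hypothesis, and the same integration from $V(0)=0$. The only cosmetic difference is that you also carry the $-r\gamma_2^{-1}\|z_e\|^2$ term through the combined inequality before discarding it, whereas the paper uses the plain dissipation form of Theorem~3 for $V_2$; this changes nothing.
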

\begin{proof} Suppose $z(t)$, $z_e(t)$, $\mbf y(t)$ $w(t)$, $\mbf e(t)$, $\mbf x(t)$ satisfy Eqn.~\eqref{eqn:DPS}. Since $z_e(t)$ is only affected by $\omega(t)$, we have by Theorem 3 that $\lVert z_e\rVert_{L_2}\leq\gamma_2\lVert \omega\rVert_{L_2}$. Define
\begin{align}
V(t)=V_1(t)+rV_2(t)
\end{align}
where $V_1(t)=\ip{\mbf x(t)}{\mcl P^{-1}\mbf x(t)}_Z$ and $V_2(t)=\ip{\mbf{e}(t)}{\mathcal{P}_2\mbf{e}(t)}_Z$. If we define expand $V_2(t)$ and apply Theorem 3, we have
\begin{align*}
\dot{V}_2(t)&-\gamma_2\lVert\omega(t)\rVert^2\leq-\epsilon_2\lVert \mbf{e}(t)\rVert^2.  \end{align*}
If we define $\mbf h(t)=\mcl P_1^{-1}\mbf x(t)\in X$ and differentiate $V_1(t)$, we have
\begin{align*}
&\dot{V}_1(t)=\left<\mathcal{AP}_1\mbf{h}(t),\mbf{h}(t)\right>_Z+\left<\mbf{h}(t),\mathcal{AP}_1\mbf{h}(t)\right>_Z\\
&+\left<\mathcal{B}_2\mathcal{H}\mbf{h}(t),\mbf{h}(t)\right>_Z+\left<\mbf{h}(t),\mathcal{B}_2\mathcal{H}\mbf{h}(t)\right>_Z\\
&+\left<\mathcal{B}_2\mathcal{H}\mcl{P}^{-1}_1\mbf{e}(t),\mbf{h}(t)\right>_Z+\left<\mbf{h}(t),\mathcal{B}_2\mathcal{H}\mcl{P}^{-1}_1\mbf{e}(t)\right>_Z\\
&\quad+\left<\mathcal{B}_1\omega(t),\mbf{h}(t)\right>_Z+\left<\mbf{h}(t),\mathcal{B}_1\omega(t)\right>_Z
.\end{align*}

Applying Theorem 2, if we define $\upsilon(t)=\frac{1}{\gamma_2}z(t)$, one gets
\begin{align*}
&\dot{V}_1(t)-\gamma_1\lVert\omega(t)\rVert^2+\gamma_1\lVert\upsilon(t)\rVert^2\\
& \leq-\epsilon_1\lVert \mbf{h}(t)\rVert^2+\left<\mathcal{B}_2\mathcal{H}\mcl{P}^{-1}_1\mbf{e}(t),\mbf{h}(t)\right>_Z\\
&+\left<\mbf{h}(t),\mathcal{B}_2\mathcal{H}\mcl{P}^{-1}_1\mbf{e}(t)\right>_Z\notag
.\end{align*}

Combining the results above, we have
\begin{align}
\dot{V}(t)&-(\gamma_1+r\gamma_2)\lVert\omega(t)\rVert^2+\gamma_1\lVert\upsilon(t)\rVert^2\notag\\
&\leq-\epsilon_1\lVert \mbf{h}(t)\rVert^2-r\epsilon_2\lVert \mbf{e}(t)\rVert^2\notag\\
&+\left<\mathcal{B}_2\mathcal{H}\mcl{P}^{-1}_1\mbf{e}(t),\mbf{h}(t)\right>_Z+\left<\mbf{h}(t),\mathcal{B}_2\mathcal{H}\mcl{P}^{-1}_1\mbf{e}(t)\right>_Z\notag\\
&=\ip{\bmat{\mbf h(t)\\\mbf e(t)}}{ \mcl{M}\bmat{\mbf h(t)\\\mbf e(t)}}\notag
.\end{align}
Then if there exist a positive scalar $r$ such that Eqn. (9) is satisfied, it follows
\begin{align*}
\dot{V}(t)-(\gamma_1+r\gamma_2)\lVert\omega(t)\rVert^2+\gamma_1\lVert\upsilon(t)\rVert^2&\leq0
.\end{align*}

Integrating in time and using $V(0)=0$, we have
\begin{align*}
\lVert z \rVert_{L_2}&\leq\sqrt{\gamma_1(\gamma_1+r\gamma_2)}\lVert \omega\rVert_{L_2}
.\end{align*}
The proof is completed.
\end{proof}
\vspace{-0.3cm}
\subsection{Expressing Multi-delay system into DPS}
In this section, we apply Theorem 4 to the case of multi-delay systems. Specifically, we consider solutions to the system of equations given by Eqn.~\eqref{eqn:nominal}.

Firstly, considering $e(t)=\hat{x}(t)-x(t)$, we write Eqn.~\eqref{eqn:nominal} into the form in Eqn.~\eqref{eqn:DPS_FSC}. Following the mathematical formalism developed in \cite{b2}, define the inner-product space $Z_{m,n,K}$:=$\{\R^m\times$ $L_2^n[-\tau_1,0]\times\cdots\times L_2^n[-\tau_K,0]\}$ and for $\{x,\phi_1,\cdots,\phi_K\}\in Z_{m,n,K}$, we use the following notation
\begin{align*}
\bmat{
x \\
\phi_i}:=\{x,\phi_1,\cdots,\phi_K\}
\end{align*}
and we define the inner product on $Z_{m,n,K}$ as
\begin{align*}
\left<
\bmat{
y\\
\psi_i
},\bmat{
x\\
\phi_i
}
\right>_{Z_{m,n,K}}=\tau_Ky^Tx+\sum_{i=1}^K\int_{-\tau_i}^0\psi_i(s)^T\phi_i(s)ds.
\end{align*}
We simplify the notation $Z_{m,n,k}$ when $m=n$ as $Z_{n,k}$.\\
Then the state-space for system (8) is defined as
\begin{align*}
X:=\left\{
\bmat{
x\\
\phi_i}
\in\mathcal{Z}_{n,K}:\begin{array}{c}
\phi_i\in W_2^n[-\tau_i,0]\text{ and }\phi_i(0)=x  \\
\text{ for all }i\in[K]
\end{array}
\right\}.
\end{align*}
We now represent the infinitesimal generator, $\mcl{A}:X\rightarrow Z_{n,K}$ of Eqn.~\eqref{eqn:DPS} as
\begin{align*}
\mathcal{A}\bmat{
x\\\phi_i
}(s)&=\bmat{
A_0x(t)+\sum A_i\phi_i(-\tau_i)\\
\dot{\phi_i}(s)
}.
\end{align*}
Furthermore, $\mcl{B}_1:\R^r\rightarrow Z_{n, K}$, $\mcl{B}_2:\R^m\rightarrow Z_{n, K}$, $\mcl{C}_1: Z_{n, K}\rightarrow \R^p$, $\mcl{C}_2: X\rightarrow Z_{q, K}$, $\mcl{C}_3: Z_{n, K}\rightarrow \R^{p_1}$, $\mcl{D}_1: \R^r\rightarrow \R^{p}$, $\mcl{D}_3: \R^r\rightarrow Z^{p_1}$  are defined as
\begin{align}
\mathcal{B}_1\omega(t)&:=\bmat{
B_1\omega(t)\\
0
}\quad
\mathcal{B}_2u(t):=\bmat{
B_2u(t)\\\
0
}\notag\\
\mathcal{C}_j\bmat{
x\\\phi_i}(s)&:=C_{j0}x(t)+\sum_i C_{ji}\phi_i(-\tau_i)\quad j=1, 3\notag\\
\mathcal{C}_2\bmat{
x\\\phi_i}(s)&:=
\bmat{C_2x(t)  \\
C_2\phi_i(s)}\\
\mathcal{D}_j\omega(t)&:=D_j\omega(t)\quad j=1,3.\notag
\end{align}

Here we assume $\mcl{D}_2=0$. Note for any solution $x(t)$ of Eqn.~\eqref{eqn:nominal}, using the above notation
\begin{align*}
(\mbf{x}(t))(s)=\left[\begin{array}{c}
x(t)\\
x(t+s)
\end{array}\right],
\end{align*}
then $\mbf{x}(t)$ satisfies Eqn.~\eqref{eqn:DPS}. The converse statement is also true. The same is true for $\mbf e(t)$, $\mbf y(t)$.
\vspace{-0.2cm}
\subsection{The operators framework}
A class of operators $\mathcal{P}_{\{P,Q_{i}, S_{i},R_{ij}\}}:Z_{m,n,K}\rightarrow Z_{m,n,K}$ is introduced which is parameterized by matrix $P$ and matrix-valued functions $Q_i\in W_2^{m\times n}[-\tau_i,0]$, $S_i\in W_2^{n\times n}[-\tau_i,0]$, $R_{ij}\in W_2^{n\times n}[-\tau_i,0]\times[-\tau_j,0]$ as
\begin{align}
&\left(\mathcal{P}_{\{P, Q_i ,S_i, R_{ij}\}}\bmat{
x  \\
\phi_i
}\right)(s):=\label{eqn:PQRS}\\&\bmat{
Px+\sum_i^K\int_{-\tau_i}^0Q_{i}(s)\phi_i(s)ds\\
\tau_KQ^T_i(s)+\tau_KS_i(s)\phi_i(s)+\sum_i^K\int_{-\tau_j}^0R_{ij}(s,\theta)\phi_j(\theta)d\theta
}\notag.
\end{align}
\begin{lem}\cite{b11}
	Suppose that $P\in \R^{n\times n}$, $S_{i}\in W_2^{n\times n}[T_i]$, $R_{ij}\in W_2^{n\times n}[T_i\times T_j]$ satisfying $S_i(s)=S_i^T(s)$, $R_{ij}(s, \theta)=R_{ji}^T(\theta, s)$, $P=\tau_KQ_i^T(0)$ and $Q_j(s)=R_{ij}(0,s)$ for all $i, j \in[K]$. Moreover suppose $\mathcal{P}_{\{P, Q_i ,S_i, R_{ij}\}}$ is coercive on $Z_{n,K}$. Then $\mathcal{P}_{\{P, Q_i ,S_i, R_{ij}\}}$ is a self-adjoint bounded linear operator with respect to the inner product defined on $Z_{n,K}$; $\mathcal{P}:X\rightarrow X$; and $\mathcal{P}_{\{P, Q_i ,S_i, R_{ij}\}}(X)=X$.
\end{lem}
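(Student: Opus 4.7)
The plan is to check in order (i) linearity, (ii) boundedness, (iii) self-adjointness with respect to the $Z_{n,K}$ inner product, (iv) the invariance $\mcl{P}_{\{P,Q_i,S_i,R_{ij}\}}(X)\subseteq X$, and (v) the surjectivity $\mcl{P}_{\{P,Q_i,S_i,R_{ij}\}}(X)=X$. Linearity is immediate from \eqref{eqn:PQRS}. Boundedness follows from Cauchy--Schwarz applied to the integrals $\int_{-\tau_i}^0 Q_i(s)\phi_i(s)\,ds$ and $\int_{-\tau_i}^0\!\int_{-\tau_j}^0 R_{ij}(s,\theta)\phi_j(\theta)\,d\theta\,ds$, combined with the one-dimensional Sobolev embedding $W_2^{n\times n}[T_i]\hookrightarrow L^\infty$ applied to $S_i$; this produces a constant depending on $\|P\|$, $\|Q_i\|_{L^2}$, $\|S_i\|_{L^\infty}$, $\|R_{ij}\|_{L^2}$ bounding $\|\mcl{P}\mbf v\|_{Z_{n,K}}$ by $c\|\mbf v\|_{Z_{n,K}}$.

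For self-adjointness I would expand $\ip{\mcl{P}(y_0,\psi_i)}{(x_0,\phi_i)}_{Z_{n,K}}$ and $\ip{(y_0,\psi_i)}{\mcl{P}(x_0,\phi_i)}_{Z_{n,K}}$ and match term by term. The $S_i$ blocks match via $S_i=S_i^T$; the two mixed $Q_i$ blocks match after relabeling summation indices; the double-integral $R_{ij}$ blocks match via Fubini and the substitution $(i,s)\leftrightarrow(j,\theta)$ together with $R_{ij}(s,\theta)=R_{ji}^T(\theta,s)$. The leading $P$-block requires $P=P^T$, which is not hypothesized outright but is a consequence of the stated boundary relations: combining $P=\tau_KQ_i^T(0)$, $Q_j(s)=R_{ij}(0,s)$, and $R_{ij}(s,\theta)=R_{ji}^T(\theta,s)$ gives $Q_i^T(0)=R_{ji}(0,0)=R_{ij}^T(0,0)=Q_j(0)^T$, hence $P^T=\tau_KQ_i(0)=\tau_KQ_j^T(0)=P$.

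For $\mcl{P}(X)\subseteq X$, given $(x_0,\phi_i)\in X$ with $\phi_i(0)=x_0$, set $(y_0,\psi_i)=\mcl{P}(x_0,\phi_i)$. That $\psi_i\in W_2^n[-\tau_i,0]$ follows because $Q_i^T$ and $S_i$ lie in $W_2^{n\times n}$ (a Banach algebra in one spatial variable), $\phi_i\in W_2^n$, and $\partial_sR_{ij}\in L^2$ permits differentiation under the integral of $\int R_{ij}(\cdot,\theta)\phi_j(\theta)\,d\theta$. The continuity condition $\psi_i(0)=y_0$ is then an algebraic verification: evaluating the formula for $\psi_i$ at $s=0$ and applying $\tau_KQ_i^T(0)=P$ and $R_{ij}(0,\theta)=Q_j(\theta)$ reproduces exactly the formula for $y_0$.

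The main obstacle is $\mcl{P}(X)=X$. My strategy is to use the assumed coercivity of $\mcl{P}$ on $Z_{n,K}$ together with the boundedness and self-adjointness just proved: a bounded, coercive, self-adjoint operator on a Hilbert space is bijective with bounded inverse (Lax--Milgram, or equivalently the spectral theorem applied to a self-adjoint operator whose spectrum is bounded away from zero). Hence $\mcl{P}^{-1}:Z_{n,K}\to Z_{n,K}$ exists and is bounded. Given $\mbf y\in X$, define $\mbf x=\mcl{P}^{-1}\mbf y$; it remains to show $\mbf x\in X$. I would do this by reversing the computations of step (iv): solving the second component of $\mcl{P}\mbf x=\mbf y$ for $\phi_i(s)$ in terms of $x_0$ and integrals of $\phi_j$ gives $\phi_i\in W_2^n$, and evaluating at $s=0$ and reusing the algebraic relations $P=\tau_KQ_i^T(0)$ and $Q_j(s)=R_{ij}(0,s)$, together with the hypothesis $\mbf y\in X$, yields the boundary compatibility $\phi_i(0)=x_0$. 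This final step---extracting the membership $\mbf x\in X$ from bare invertibility on $Z_{n,K}$---is the technical heart of the argument and is where the boundary compatibility conditions do work beyond just enforcing self-adjointness.
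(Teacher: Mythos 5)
The paper itself gives no proof of this lemma: it is imported from \cite{b11} as a black box, so there is no in-text argument to compare yours against. That said, your architecture---linearity, boundedness via Cauchy--Schwarz and the one-dimensional embedding $W_2\hookrightarrow L^\infty$, self-adjointness by term-matching (including the correct observation that $P=P^T$ is not assumed but follows from $P=\tau_KQ_i^T(0)$, $Q_j(s)=R_{ij}(0,s)$ and $R_{ij}(s,\theta)=R_{ji}^T(\theta,s)$), and surjectivity via Lax--Milgram on $Z_{n,K}$ followed by a regularity-and-compatibility bootstrap for $\mcl{P}^{-1}$---is the standard and essentially the only reasonable route. Most of it is sound. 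Two steps, however, have genuine gaps.

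First, the boundary-compatibility check. Evaluating the second component of \eqref{eqn:PQRS} at $s=0$ on an element with $\phi_i(0)=x$ gives $\tau_KQ_i^T(0)x+\tau_KS_i(0)x+\sum_j\int_{-\tau_j}^0R_{ij}(0,\theta)\phi_j(\theta)\,d\theta$. The relations $P=\tau_KQ_i^T(0)$ and $Q_j(\theta)=R_{ij}(0,\theta)$ cancel the first and third terms against $y_0=Px+\sum_j\int Q_j(\theta)\phi_j(\theta)\,d\theta$, but the term $\tau_KS_i(0)x$ survives, so your claim that the evaluation ``reproduces exactly the formula for $y_0$'' does not go through as written. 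What the computation actually forces is $P=\tau_K\bigl(Q_i^T(0)+S_i(0)\bigr)$; the statement as transcribed here appears to have dropped the $S_i(0)$ contribution. The discrepancy is not cosmetic: in your step (v) the same identity, taken literally with $P=\tau_KQ_i^T(0)$, yields $\tau_KS_i(0)\phi_i(0)=0$ and hence $\phi_i(0)=0\neq x_0$, so $\mcl{P}^{-1}\mbf y\notin X$ and the conclusion $\mcl{P}(X)=X$ fails. You must either work with the corrected boundary relation or justify $S_i(0)=0$.

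Second, your surjectivity bootstrap ``solve the second component of $\mcl{P}\mbf x=\mbf y$ for $\phi_i(s)$'' silently requires $S_i(s)$ to be invertible at every $s\in T_i$. This is true but not free: it has to be extracted from the assumed coercivity on $Z_{n,K}$ by testing against inputs with $x=0$ and $\phi_i$ concentrated near a point $s_0$ of the $i$-th channel, which kills the $R_{ii}$ double-integral contribution in the limit and yields $\tau_KS_i(s_0)\geq\eps I$. Only then is $S_i^{-1}\in W_2^{n\times n}[T_i]$, and only then does your Volterra-type argument deliver $\phi_i\in W_2^n[T_i]$. Without this uniform positivity of $S_i$, the technical heart of step (v)---which you correctly identify as the crux---is unsupported.
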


Now let us turn to the operators used in Theorem 4. We define $\mcl P_1:=\mcl P_{\{P_1, Q_{1i}, S_{1i}, R_{1ij}\}}$ and $\mcl P_2:=\mcl P_{\{P_2, Q_{2i}, S_{2i}, R_{2ij}\}}$ and we parameterize the decision variable $\mathcal{H}:Z_{n,k}\rightarrow \R^m$ using matrices $H_0, H_{1i}$ and functions $H_{2i}$ as
{\small{\begin{align}
&\mathcal{H}\bmat{
y \\
y_i
}(s)=\left[H_0y+\sum_iH_{1i}y_i(-\tau_i)+\sum_i\int_{-\tau_i}^0H_{2i}(s)y_i(s)ds\right].\label{defn:H}
\end{align}}}
Similarly, the decision variable $\mcl Z$ is parameterized as
{\small{
\begin{align}
\mathcal{Z}\bmat{
y  \\
y_i}(s)&=\bmat{Z_1y_0+\sum_i Z_{2i}y_i(-\tau_i)+\sum_i\int_{-\tau_i}^0Z_{3i}(s)y_i(s)ds\notag\\
\tau_Kz_i(s)}\\
z_i(s)&=Z_{4i}(s)y_0+\sum_j Z_{5ij}(s)y_j(-\tau_j)+Z_{6i}(s)y_i(s)\notag\\&+\sum_j \int_{-\tau_j}^0 Z_{7ij}(s,\theta)y_j(\theta)d\theta.\label{defn:Z}
\end{align}}}
In~\cite{b12}, it was shown that for $\mcl Z$ as parameterized above, if $\mcl L=\mcl P^{-1}_1\mcl Z$, then the error injection operator $\mathcal{L}:Z_{q,k}\rightarrow Z_{n,k}$ corresponds to the estimator structure defined in Eqn.~\eqref{eqn:nominal}. The same is true for $\mcl K=\mcl H\mcl P_2^{-1}$.

To simplify presentation, we do not present the LMI constraints on the coefficients of $\{P, Q_i ,S_i, R_{ij}\}$ which ensure $\mathcal{P}_{\{P, Q_i ,S_i, R_{ij}\}} \ge 0$. Rather, we simply represent these constraints using the following notation.
\begin{align*}
&\Xi_{d,m,n,K}:=\\
&\left\{\{P,Q_i,R_{ij},S_i\}:\substack{ \{P,Q_i,R_{ij},S_i\} \text{ satisfy the conditions of Corollary 4} \\\text{in~\cite{b12}} \hspace{-.5mm} } \right\}.
\end{align*}

By Theorem 8 in \cite{b11}, if $\{P-\epsilon I,Q_i,R_{ij},S_i-\epsilon I \} \in \Xi_{d,m,n,K}$, then $\mathcal{P}_{\{P, Q_i ,S_i, R_{ij}\}}$ is coercive and has an inverse of the form $\hat{\mcl P}:=\hat{\mcl P}_{\{\hat{P}, \frac{1}{\tau_K}\hat{Q}_i ,\frac{1}{\tau_K^2}\hat{S}_i, \frac{1}{\tau_K}\hat{R}_{ij}\}}$. In this paper, we do not explicitly represent the map to $\{\hat{P}, \hat{Q}_i ,\hat{S}_i, \hat{R}_{ij}\}$, but rather combine it into a single map from $\{P, Q_i ,S_i, R_{ij}\}$ and $\{Z_1, Z_{2i}, Z_{3i}, Z_{4i}, Z_{5ij}, Z_{6i}, Z_{7ij}\}$ (resp. $\{H_1,H_{2i}, H_{3i}\}$) to $\{L_1, L_{2i}, L_{3i}, L_{4i}, L_{5ij}, L_{6i}, L_{7ij}\}$ (resp. $\{K_0, K_{1i}, K_{2i}\}$) which we then denote using the following.

\textbf{Definition of $\mcl L_o$:}
\vspace{-0.3cm}
{\small{
\begin{align*}
\{L_1, L_{2i},\cdots, L_{7ij}\} = \mcl L_{o}(\{P, Q_i ,S_i, R_{ij}\},\{Z_1, Z_2 ,\cdots, Z_{7ij}\})
\end{align*}}}
to indicate that if $\{\hat{P}, \hat{Q}_i ,\hat{S}_i, \hat{R}_{ij}\}$ are as defined in Theorem~8 in~\cite{b11}, then  $\{L_1, L_{2i}, \cdots, L_{7ij}\}$, $\{\hat{P}, \hat{Q}_i ,\hat{S}_i, \hat{R}_{ij}\}$, and $\{Z_1, Z_2 ,\cdots, Z_{7ij}\}$ satisfy Lemma 7 in~\cite{b12}.

%

\textbf{Definition of $\mcl L_c$:}
Likewise, we say
\[
\{K_0, K_{1i}, K_{2i}\} = \mcl L_{c}(\{P, Q_i , S_i, R_{ij}\},\{H_0, H_{1i}, H_{2i}\})
\]
to indicate that if $\{\hat{P}, \hat{Q}_i ,\hat{S}_i, \hat{R}_{ij}\}$ are as defined in Theorem~8 in~\cite{b11}, then $\{K_0, K_{1i}, K_{2i}\}$, $\{\hat{P}, \hat{Q}_i ,\hat{S}_i, \hat{R}_{ij}\}$, and $\{H_0, H_{1i}, H_{2i}\}$ satisfy Lemma 9 in \cite{b11}.

%
\vspace{-0.1cm}
\subsection{Theorem 4 applied to Multi-delay systems}
In this section, we formulate the conditions of Theorem 4 into multi-delay systems as a linear operator inequality where all operators are the form of Eqn.~\eqref{eqn:PQRS}.
\begin{thm}\label{thm:main}
	Suppose there exist $d\in \N$, positive scalars $\epsilon, \epsilon_1, \epsilon_2$, $\gamma_1, \gamma_2$, $\{P_1, Q_{1i}, S_{1i}, R_{1ij}\}$ satisfying Lemma 5, matrices $ P_2\in \R^{n\times n}$, polynomials $S_{2i}, Q_{2i}\in W_2^{n\times n}[T_i]$, $R_{2ij}\in W_2^{n\times n}[T_i\times T_j]$, matrices $H_0, H_{1i}\in \R^{p\times n}$, polynomial $H_{2i}\in W_2^{p\times n}[T_i]$, matrices $Z_1, Z_{2i}\in \R^{n\times q}$, polynomials $Z_{3i}$, $Z_{4i}$, $Z_{5ij}$, $Z_{6i}\in W_2^{n\times q}[T_i]$ and $Z_{7ij}\in W_2^{n\times q}[T_i\times T_j]$ for all $i$, $j\in[K]$ such that
	\begin{align*}
\{P_1-\epsilon I,Q_{1i}, S_{1i},R_{1ij}\}&\in \Xi_{d,n,n,K}\\
     -\{E_1+\epsilon_1\hat{I}_1, F_{1i}, N_{1i}+\epsilon_1I, G_{1ij}\}&\in \Xi_{d,m_0,n,K}\\
       \{P_2-\epsilon I,Q_{2i}, S_{2i},R_{2ij}\}&\in \Xi_{d,n,n,K}\\
	-\{E_2+\epsilon_2\hat{I}_2, F_{2i}, N_{2i}+\epsilon_2I, G_{2ij}\}&\in \Xi_{d,m_1,n,K}
    \end{align*}
    where
	\begin{align*}
&\{E_1, F_{1i}, {H}_{1i}, G_{1ij}\} \\
&\qquad = \mcl L_{1}(\{P_1,Q_{1i}, S_{1i},R_{1ij}\}, \{H_0, H_{1i},H_{2i}\})\\
&\{E_2, F_{2i}, {H}_{2i}, G_{2ij}\} \\
&\qquad= \mcl L_{2}(\{P_1,Q_{2i}, S_{2i},R_{2ij}\}, \{Z_0, Z_{1i},Z_{2i},\cdots,\})
    \end{align*}
and $m_0=p+r+n(K+1)$, $m_1=p+r+n(K+1)$, $\mcl L_1$ and $\mcl L_2$ are as defined in Appendix, $\hat{I}_1=\text{diag}(0_{r+p}, I_n, 0_{nK})$ and $\hat{I}_2=\text{diag}(0_{r+p_{1}}, I_n, 0_{nK})$.

Let {\small{\[
\{L_1, L_{2i},\cdots, L_{7ij}\} = \mcl L_{o}(\{P, Q_i ,S_i, R_{ij}\},\{Z_1, Z_2 ,\cdots, Z_{7ij}\})\]}}
 and {\small{\[
\{K_0, K_{1i}, K_{2i}\} = \mcl L_{c}(\{P, Q_i, S_i, R_{ij}\},\{H_0, H_{1i}, H_{2i}\}).
\]}}
Now further suppose that $r>0$ and
    \begin{align}
    -\{E_3+\epsilon_3\hat{I}_3, F_{3i}, N_{3i}, 0\}&\in \Xi_{d,n(K+2),2n,K}
	\end{align}
where{\small{
\begin{align*}
  &E_3=\bmat{-\frac{\epsilon_1}{\tau_K}I& B_2K_0       & B_2K_{11}      & \hdots&B_2K_{1k}\\
           *^T         & -r\frac{\epsilon_2}{\tau_K}I&0             &\hdots  &0\\
           *^T         &*^T           &0 &\hdots  &0\\
            \vdots     &\vdots        &\vdots        &\ddots &\vdots\\
            *^T         &*^T          &*^T            &*^T   &0}\\
            &F_{3i}=\bmat{K^T_{2i}(s)B^T_2& 0& 0&\cdots&0\\ 0& 0&0&\cdots&0}^T\quad \\
            &N_{3i}=\bmat{-r\frac{\epsilon_2}{\tau_K}I&0\\0&-\epsilon_1I}.
\end{align*}}} and $\hat{I}_3=\text{diag}(I_{n}, 0_n, 0_{nK})$.
Then if $w$, $z$ and $z_e$ satisfy Eqn.~\eqref{eqn:nominal} for some $\mbf x$ and $\hat{\mbf x}$, we have $\lVert z \rVert_{L_2}\leq\sqrt{\gamma_1(\gamma_1+r\gamma_2)}\lVert \omega\rVert_{L_2}$ and $\lVert z_e\rVert_{L_2}\leq\gamma_2\lVert \omega\rVert_{L_2}$.
\end{thm}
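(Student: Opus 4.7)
The plan is to reduce Theorem~\ref{thm:main} to a direct application of Theorem~\ref{thm:abstract} on the distributed-parameter representation from Section~III-B. I identify $\mbf x(t)=(x(t),x(t+\cdot))$ and $\mbf e(t)=(e(t),e(t+\cdot))$ as elements of $Z_{n,K}$, use the operators $\mcl A,\mcl B_1,\mcl B_2,\mcl C_j,\mcl D_j$ defined in Section~III-B, set $\mcl P_1=\mcl P_{\{P_1,Q_{1i},S_{1i},R_{1ij}\}}$ and $\mcl P_2=\mcl P_{\{P_2,Q_{2i},S_{2i},R_{2ij}\}}$, and let $\mcl H,\mcl Z$ be as in~\eqref{defn:H}--\eqref{defn:Z}. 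Three hypotheses of Theorem~\ref{thm:abstract} then have to be checked: the state-feedback inequality of Theorem~2, the estimator inequality of Theorem~3, and the coupling inequality $\ip{(\mbf h,\mbf e)}{\mcl M(\mbf h,\mbf e)}\le 0$.

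For the first two, Lemma~5 applied to $\{P_j,Q_{ji},S_{ji},R_{jij}\}$ combined with the assumption $\{P_j-\epsilon I,Q_{ji},S_{ji},R_{jij}\}\in\Xi_{d,n,n,K}$ yields coercivity, self-adjointness, and $X$-invariance of $\mcl P_j$, so the hypotheses of Theorems~1 and~3 are met. I then expand the quadratic form of Theorem~2, using the parameterizations of $\mcl A,\mcl B_1,\mcl B_2,\mcl C_1,\mcl D_1,\mcl P_1,\mcl H$, into a bilinear form $\ip{\xi}{\mcl P_{\{E_1,F_{1i},N_{1i},G_{1ij}\}}\xi}$ on $\xi=(\omega,\upsilon,\mbf h)\in\R^{m_0}\times Z_{n,n,K}$; the map $\mcl L_1$ of the appendix is exactly the algebraic rule that produces $\{E_1,F_{1i},N_{1i},G_{1ij}\}$ from $\{P_1,Q_{1i},S_{1i},R_{1ij}\}$ and $\{H_0,H_{1i},H_{2i}\}$. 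The assumption $-\{E_1+\epsilon_1\hat I_1,F_{1i},N_{1i}+\epsilon_1 I,G_{1ij}\}\in\Xi_{d,m_0,n,K}$ then forces this form to be bounded by $-\epsilon_1\lVert\mbf h\rVert^{2}$, with $\hat I_1$ isolating the $\mbf h$-component, which is exactly Theorem~2's hypothesis with $\gamma_1$. The verification for Theorem~3 is verbatim analogous using $\mcl P_2,\mcl Z,\mcl L_2,\gamma_2$, together with the identification from~\cite{b12} that $\mcl L=\mcl P_2^{-1}\mcl Z$ produces through $\mcl L_o$ the injection gains $\{L_1,\ldots,L_{7ij}\}$ appearing in the nominal dynamics.

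For the coupling inequality, I compute $\mcl B_2\mcl H\mcl P_1^{-1}$ explicitly. Since $\mcl B_2$ injects only into the $\R^n$ slot of $Z_{n,K}$, and since $\mcl H\mcl P_1^{-1}$ is, by Lemma~9 of~\cite{b11} (captured here by $\mcl L_c$), the map $(\hat x,\hat\phi_i)\mapsto K_0\hat x+\sum_i K_{1i}\hat\phi_i(-\tau_i)+\sum_i\int_{-\tau_i}^{0}K_{2i}(s)\hat\phi_i(s)\,ds$, the off-diagonal block produces precisely the entries $B_2K_0$, $B_2K_{1i}$, $B_2K_{2i}(s)$ that populate $E_3$ and $F_{3i}$. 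The diagonal blocks $-\epsilon_1 I$ and $-r\epsilon_2 I$ of $\mcl M$ become the $-\epsilon_1/\tau_K$ and $-r\epsilon_2/\tau_K$ coefficients of $E_3$ and $N_{3i}$ once one accounts for the $\tau_K$ weighting in the $Z_{n,K}$ inner product and for how the penalty is split between the $\R^n$ slot and the $L_2^n[T_i]$ slots. Consequently $\ip{(\mbf h,\mbf e)}{\mcl M(\mbf h,\mbf e)}$ is exactly the quadratic form of $\mcl P_{\{E_3,F_{3i},N_{3i},0\}}$ on the augmented state in $Z_{n(K+2),2n,K}$, and its nonpositivity is equivalent to the assumed $-\{E_3+\epsilon_3\hat I_3,F_{3i},N_{3i},0\}\in\Xi_{d,n(K+2),2n,K}$. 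Invoking Theorem~\ref{thm:abstract} then yields the bounds $\lVert z\rVert_{L_2}\le\sqrt{\gamma_1(\gamma_1+r\gamma_2)}\lVert\omega\rVert_{L_2}$ and $\lVert z_e\rVert_{L_2}\le\gamma_2\lVert\omega\rVert_{L_2}$.

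The main obstacle is the coefficient bookkeeping hidden in $\mcl L_1,\mcl L_2$ and in the inverse-parameterization implicit in $\mcl L_c,\mcl L_o$: each composite operator $\mcl A\mcl P_1$, $\mcl P_2\mcl A$, $\mcl P_2\mcl B_1+\mcl Z\mcl D_2$, $\mcl B_2\mcl H\mcl P_1^{-1}$ must be re-expressed in the canonical $\mcl P_{\{P,Q_i,S_i,R_{ij}\}}$ form so that the matching $\Xi$-membership constraint is recognisable. This algebra is already done in~\cite{b11} and~\cite{b12} for the state-feedback and estimator pieces; the only genuinely new calculation is for the cross term, where the key observation is that $\mcl B_2\mcl H\mcl P_1^{-1}$ sends any $\mbf e=(e,e_i)$ into the $\R^n$ slot as $B_2\bigl(K_0 e+\sum_i K_{1i} e_i(-\tau_i)+\sum_i\int_{-\tau_i}^{0}K_{2i}(s)e_i(s)\,ds\bigr)$ while contributing nothing to the $L_2^n[T_i]$ components, so the resulting bilinear form fits the $\mcl P_{\{E_3,F_{3i},N_{3i},0\}}$ template.
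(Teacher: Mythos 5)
Your proposal is correct and follows essentially the same route as the paper: identify the delay system with the DPS representation, use the $\Xi$-memberships together with Lemma~5 and the maps $\mcl L_1,\mcl L_2$ (via the cited results of~\cite{b11},~\cite{b12}) to certify the hypotheses of Theorems~2 and~3, then expand $\ip{(\mbf h,\mbf e)}{\mcl M(\mbf h,\mbf e)}$ componentwise and recognise it as the quadratic form of $\mcl P_{\{E_3,F_{3i},N_{3i},0\}}$ on $Z_{n(K+2),2n,K}$ so that the final $\Xi$-condition gives $\mcl M\le 0$, after which Theorem~\ref{thm:abstract} yields both bounds. Your treatment of the cross term $\mcl B_2\mcl H\mcl P_1^{-1}$ is exactly the paper's calculation, and your writing $\mcl L=\mcl P_2^{-1}\mcl Z$ is in fact the consistent form of what the paper's proof states.
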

\begin{proof}
Let $\mcl{A}, \mcl{B}_1, \mcl{B}_2, \mcl{C}_1, \mcl{D}_1, \mcl{C}_2, \mcl{D}_2$ be as defined in Eqn. (11). Now define $\mcl{L}$ as
 {\small{
\begin{align}
&\mcl{L}\bmat{
y_0  \\
y_i}(s)=\bmat{
L_1y_0+\sum_i L_{2i}y_i(-\tau_i)+\sum_i\int_{-\tau_i}^0L_{3i}(s)y_i(s)ds\\
l_i(s)}\notag
\\
&l_i(s)=L_{4i}(s)y_0+\sum_j L_{5ij}(s)y_j(-\tau_j)+L_{6i}(s)y_i(s)\notag\\&\qquad+\sum_j \int_{-\tau_j}^0 L_{7ij}(s,\theta)y_j(\theta)d\theta.
\end{align}}}
and $\mcl{K}$ as
{\small{
\begin{align}
u(t)&=\mcl K\mbf x(t)\\
&=K_0x(t)+\sum_i K_{1i}x(t-\tau_i)+\sum_i\int_{-\tau_i}^0K_{2i}(s)x(t+s)ds\notag.
\end{align}}}

Since $\{P_1-\epsilon I,Q_{1i}, S_{1i}-\epsilon I,R_{1ij}\}\in \Xi_{d,n,n,K}$ and $\{P_2-\epsilon I,Q_{2i}-\epsilon I, S_{2i},R_{2ij}\}\in \Xi_{d,n,n,K}$, $\mcl P_1:=\mcl P_{\{P_1, Q_{1i}, S_{1i}, R_{1ij}\}}$ and $\mcl P_2:=\mcl P_{\{P_2, Q_{2i}, S_{2i}, R_{2ij}\}}$ are coercive. Let $\mcl Z$ be as defined in~\eqref{defn:Z} and $\mcl H$ be as defined in~\eqref{defn:H}.
Now by Theorem 5 and Lemma 10 in~\cite{b11}, $\mcl K= \mcl H \mcl P_1^{-1}$ and by Theorem 5 and Lemma 9 in~\cite{b12}, $\mcl L=\mcl P_1^{-1}\mcl Z$.

Next, if we define
\begin{align*}
 \mathcal{M}=\left[\begin{array}{ccc}
     -\epsilon_1I & \mathcal{B}_2\mathcal{H}\mathcal{P}_1^{-1}\\
     (\mathcal{B}_2\mathcal{H}\mathcal{P}_1^{-1})^* &-r\epsilon_2I\\
     \end{array}\right]=\bmat{-\epsilon_1I & \mathcal{B}_2\mathcal{K} \\(\mathcal{B}_2\mathcal{K})^* &-r\epsilon_2I}
\end{align*}
and for $\mbf h, \mbf e \in X$, we expand the expression
\begin{align*}
&\ip{\bmat{\mbf h\\\mbf e}}{ \mcl{M}\bmat{\mbf h\\\mbf e}}\\
&=\ip{\mathcal{B}_2\mathcal{K}\mbf{e}} {\mbf{h}}+\ip{\mbf{h}} {\mathcal{B}_2\mathcal{K}\mbf{e}}_Z -\epsilon_1 \norm{\mbf h}^2 -r \epsilon_2 \norm{\mbf e}^2
\end{align*}
\vspace{-0.3cm}
Now let
\begin{align*}
\mbf h(s) = \bmat{h_1\\h_{2i}(s)}, \mbf e(s) = \bmat{e_1\\e_{2i}(s)}\end{align*}
We have
\begin{align*}
&\ip{\bmat{\mbf h\\\mbf e}}{ \mcl{M}\bmat{\mbf h\\\mbf e}}\\
&=2\tau_Kh_1^T (B_2K_0e_1 +\sum_iB_2 K_{1i}e_{2i}(-\tau_i)\\
&+\sum_i\int_{-\tau_{K}}^0B_2K_{2i}e_{2i}(s)ds)-\epsilon_1h_1^Th_1-r\epsilon_2e_1^Te_1\\
&-\epsilon_1\sum_i\int_{-\tau_{K}}^0h_{2i}^T(s)h_{2i}(s)ds-r\epsilon_2\sum_i\int_{-\tau_{K}}^0e_{2i}^T(s)e_{2i}(s)ds.
\end{align*}
If we define $f_1=[h_1^T, e^T_1, e^T_{2i}(-\tau_1),\cdots,e^T_2(-\tau_K)^T]^T$ and $f_{2i}(s)=[e_{2i}^T(s), h_{2i}^T(s)]^T$, then  we obtain
\begin{align*}
&\ip{\bmat{\mbf h\\\mbf e}}{ \mcl{M}\bmat{\mbf h\\\mbf e}}\\
&=\ip{\bmat{f_1\\f_2i(s)}} {\mcl{P}_{\{E_3, F_{3i}, N_{3i}, 0\}}\bmat{f_1\\f_{2i}(s)}}_{Z_{n(K+2), 2n, K}}\label{eqn:quadratic}.
\end{align*}
Since $-\{E_3+\epsilon_3\hat{I}_3, F_{3i}, N_{3i}, 0\}\in \Xi_{d,n(K+2),2n,K}$, we conclude that $\mcl M \le 0$ and hence all the conditions of Theorem~\ref{thm:abstract} are satisfied.
Finally, suppose that $y(t)$, $z(t)$, $z_e(t)$, and $x(t)$ satisfy Eqn.~\eqref{eqn:nominal}.
If we define $\mbf y=y$, and
{\small{
\[
(\mbf{x}(t))(s)=\left[\begin{array}{c}
x(t) \\
x(t+s)
\end{array}\right],\; (\mbf e(t))(s)=\bmat{\hat x(t)-x(t)\\\hat{x}(t+s)-x(t+s)},
\]}}
then $\omega(t)$ $y(t)$, $z(t)$, $z_e(t)$, $\mbf e(t)$ and $\mbf x(t)$ satisfy~\eqref{eqn:DPS} and hence by Theorem~\ref{thm:abstract}, we have that $\norm{z}\le \sqrt{\gamma_1 (\gamma_1+r \gamma_2)}\norm{\omega}$ and $\norm{z_e}_{L_2}\le \gamma_2 \norm{\omega}_{L_2}$.
\end{proof}

Theorem 7 provides a method for using LMIs to construct estimator-based output feedback controllers for systems with multiple delays, including a bound on the closed loop $L_2$-gain.

\section{Numerical implementation, testing, validation}
The algorithms described in this paper have been implemented in Matlab within the DelayTOOLs framework, which is based on SOSTOOLS and the pvar framework. All the tools needed are available online for validation or download on Code Ocean [3].

For simulation, a fixed-step forward-difference-based discretization method is used, with a different set of states representing each delay channel. In the simulation results given below, 20 spatial discretization points are used for each delay channel.

\subsection{Example 1}
In this example, we consider the unstable system modified from the result in \cite{b14} which is in the form of Eqn.~\eqref{eqn:nominal} with
\begin{align*}
&A_0=\left[\begin{matrix}
0 & 0 \\
0 & 1
\end{matrix}\right]\quad A_1=\left[\begin{matrix}
-1 & -1 \\
0 & -0.9
\end{matrix}\right] \quad B_1=\left[\begin{matrix}
1 & 0 \\
0 & 1
\end{matrix}\right]\\&B_2=\left[\begin{matrix}
0 \\ 1
\end{matrix}\right]\quad C_{10}=\left[\begin{matrix}
1 & 0
\end{matrix}\right]\quad D_{1}=\left[\begin{matrix}
1 & 0
\end{matrix}\right]\\
&C_{30}=\left[\begin{matrix}
1.5 & 0.5
\end{matrix}\right]\quad D_{3}=\left[\begin{matrix}
1 & 0
\end{matrix}\right]
\quad C_2=\left[\begin{matrix}
1 & 0
\end{matrix}\right]
\end{align*}
and $\tau=0.99$.

\subsection{Example 2}
This example is given by modifying the result from \cite{b16} which is in the form of Eqn.~\eqref{eqn:nominal} with
\begin{align*}
&A_0=\left[\begin{matrix}
-10 & 10  \\
0 & 1
\end{matrix}\right]\quad A_1=\left[\begin{matrix}
1 & 1  \\
1 & 1
\end{matrix}\right]\quad B_1=\left[\begin{matrix}
1 & 0 \\
0 & 1
\end{matrix}\right]\\&B_2=\left[\begin{matrix}
1 \\
1
\end{matrix}\right]u(t)\quad C_{10}=\left[\begin{matrix}
1 & 0 \\
0 & 1
\end{matrix}\right]\quad C_{30}=\left[\begin{matrix}
1.2 & 0 \\
0 & 1.2
\end{matrix}\right]\\&C_{2}=\left[\begin{matrix}
0 & 10
\end{matrix}\right] \quad\text{and} \quad \tau=0.3.
\end{align*}
\subsection{Example 3}
This example considers the 2-delay case as a modified version of Example 1, which is in the form of Eqn.~\eqref{eqn:nominal} with
\begin{align*}
&A_0=\left[\begin{matrix}
0 & 0  \\
0 & 1
\end{matrix}\right]\quad A_i=\left[\begin{matrix}
-0.5 & -0.5  \\
0 & -0.45
\end{matrix}\right]\quad i=1, 2\\
&B_1=\left[\begin{matrix}
1 & 0 & 0\\
0 & 1 & 0
\end{matrix}\right]\quad B_2=\left[\begin{matrix}
1 \\
1
\end{matrix}\right]\quad C_{10}=\left[\begin{matrix}
1 & 0
\end{matrix}\right]\\&C_{30}=\left[\begin{matrix}
1.1 & 0.2
\end{matrix}\right]\quad C_2=\left[\begin{matrix}
0 & 1
\end{matrix}\right]
\end{align*}
and $\tau_1=0.5, \tau_2=1$.

\begin{figure*}[htbp]
\centering
\begin{minipage}[t]{0.31\linewidth}
\centering
\includegraphics[height=1.5in,width=2.4in]{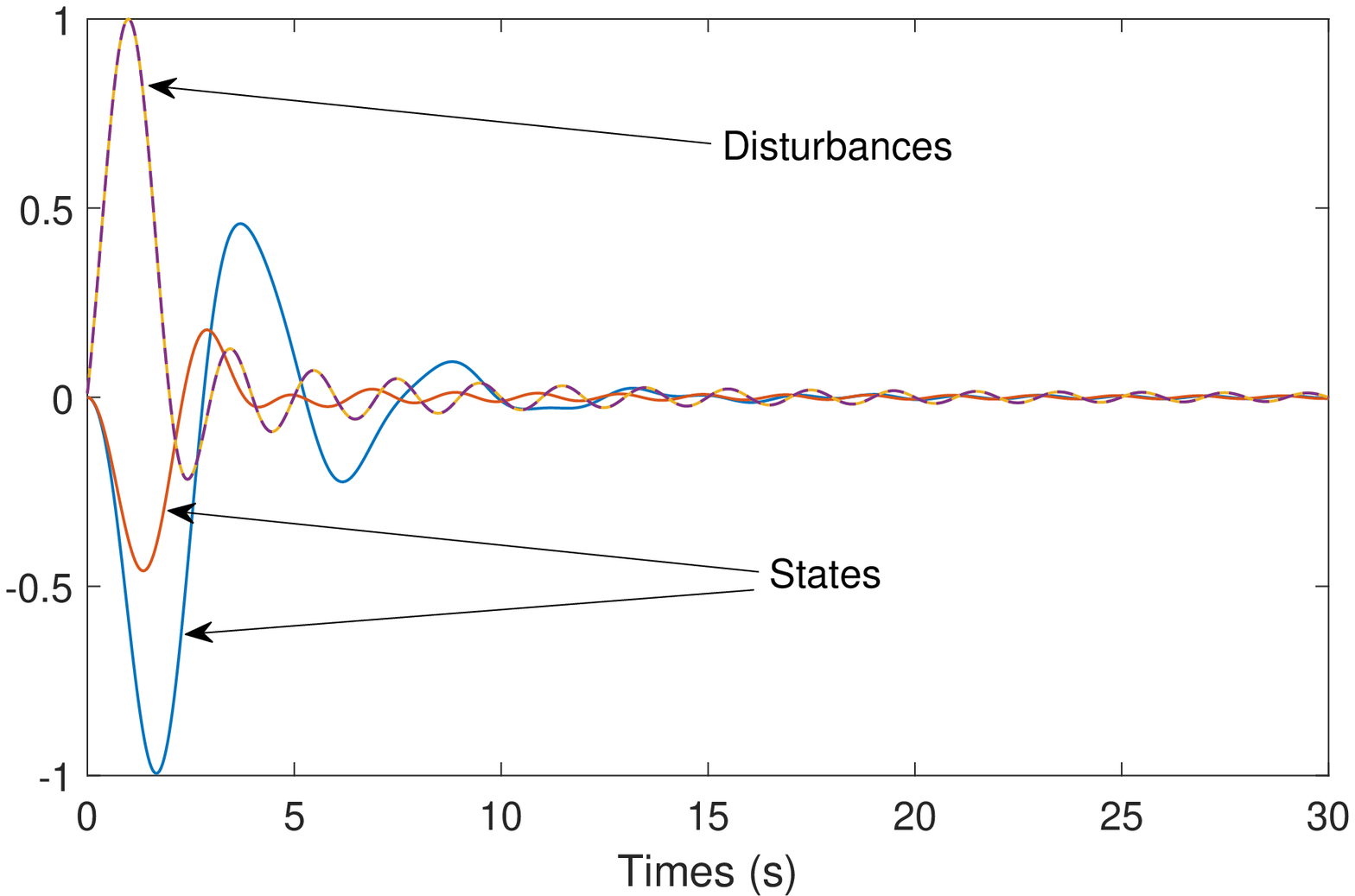}
\caption{Errors in the estimated state $e(t)$ in closed-loop response for a sinc disturbance for Example 1}
\end{minipage}
\quad
\begin{minipage}[t]{0.31\linewidth}
\centering
	 \includegraphics[height=1.5in,width=2.4in]{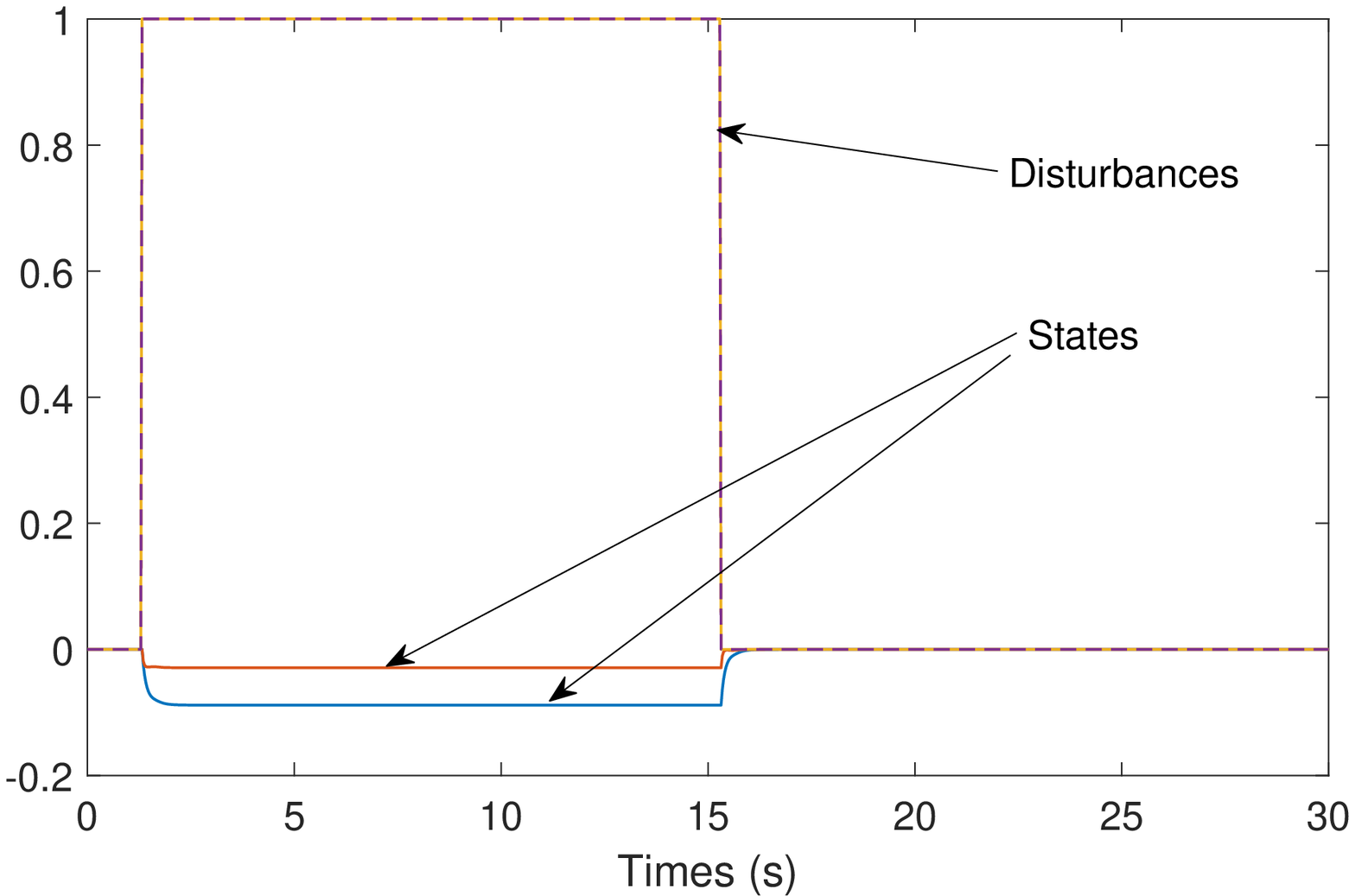}
	\caption{Errors in the estimated state $e(t)$ in closed-loop response for a step-like disturbance for Example 2}
\end{minipage}
\quad
\begin{minipage}[t]{0.31\linewidth}
\centering
	 \includegraphics[height=1.5in,width=2.4in]{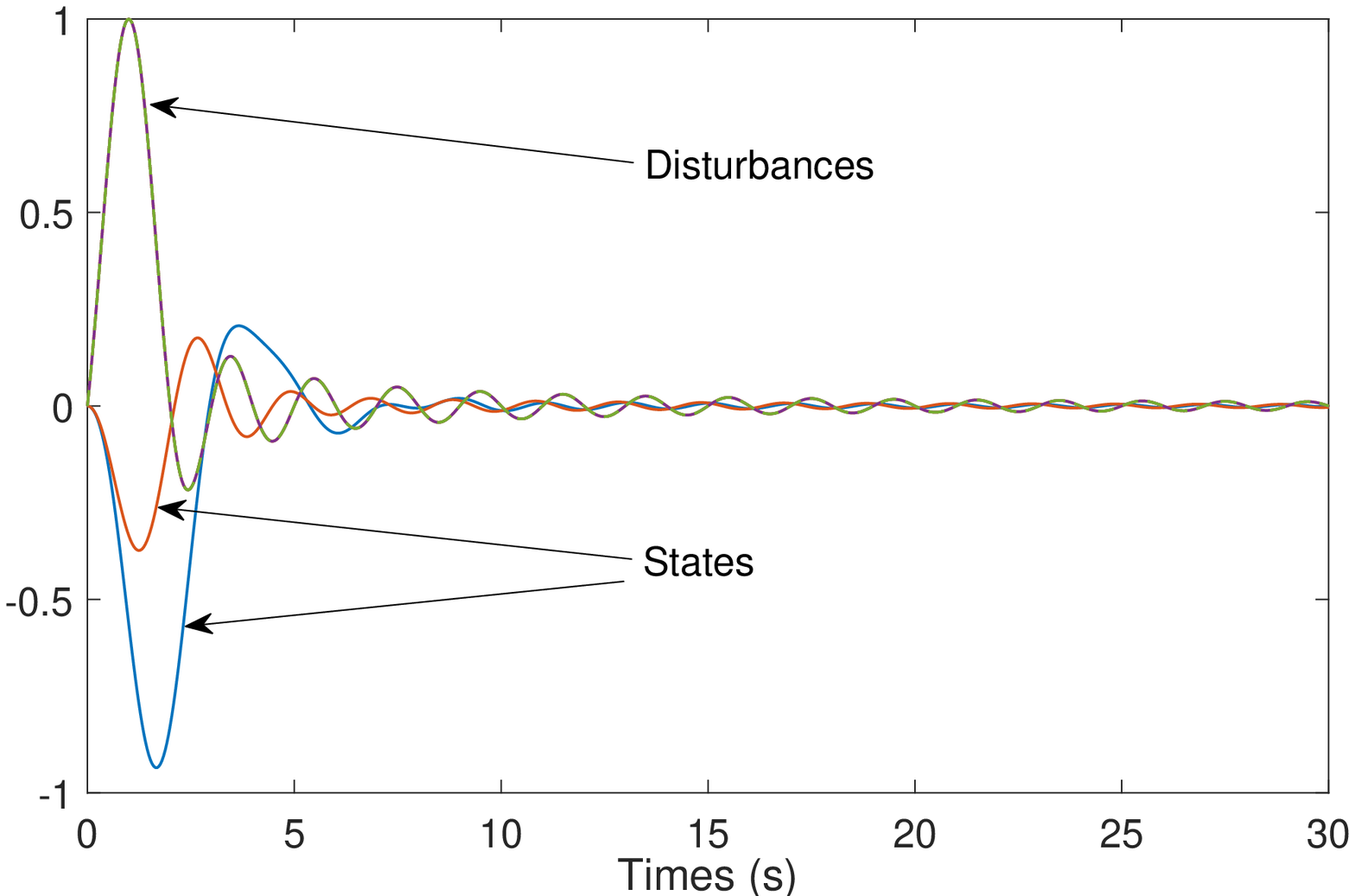}
	\caption{Errors in the estimated state $e(t)$ in closed-loop response for a sinc disturbance for Example 3}
\end{minipage}
\end{figure*}

\begin{figure*}[htbp]
\centering
\begin{minipage}[t]{0.31\linewidth}
\centering
    \includegraphics[height=1.5in,width=2.4in]{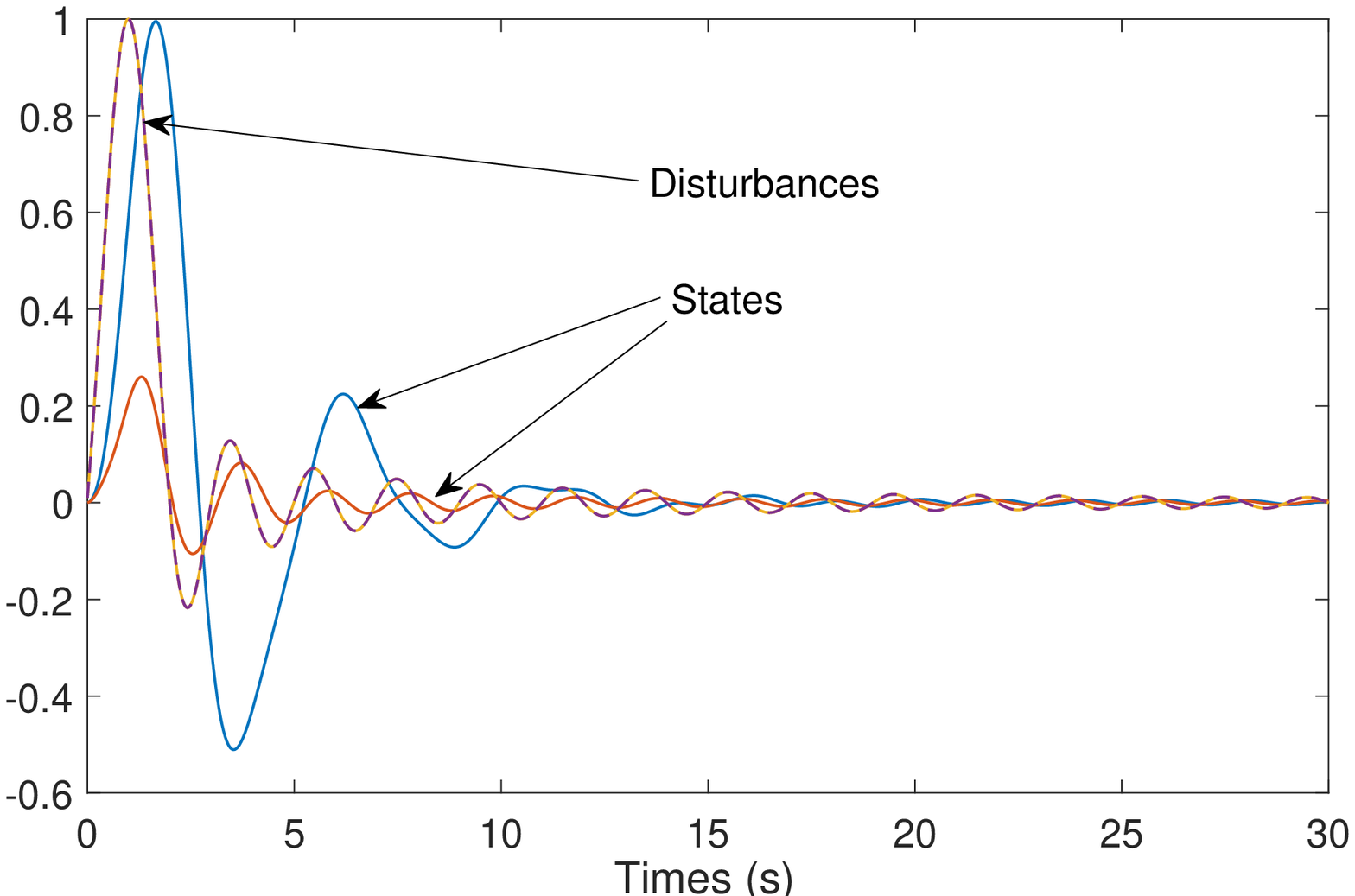}
	\caption{State trajectory $x(t)$ in closed-loop response for a sinc disturbance for Example 1}
\end{minipage}
\quad
\begin{minipage}[t]{0.31\linewidth}
\centering
	 \includegraphics[height=1.5in,width=2.4in]{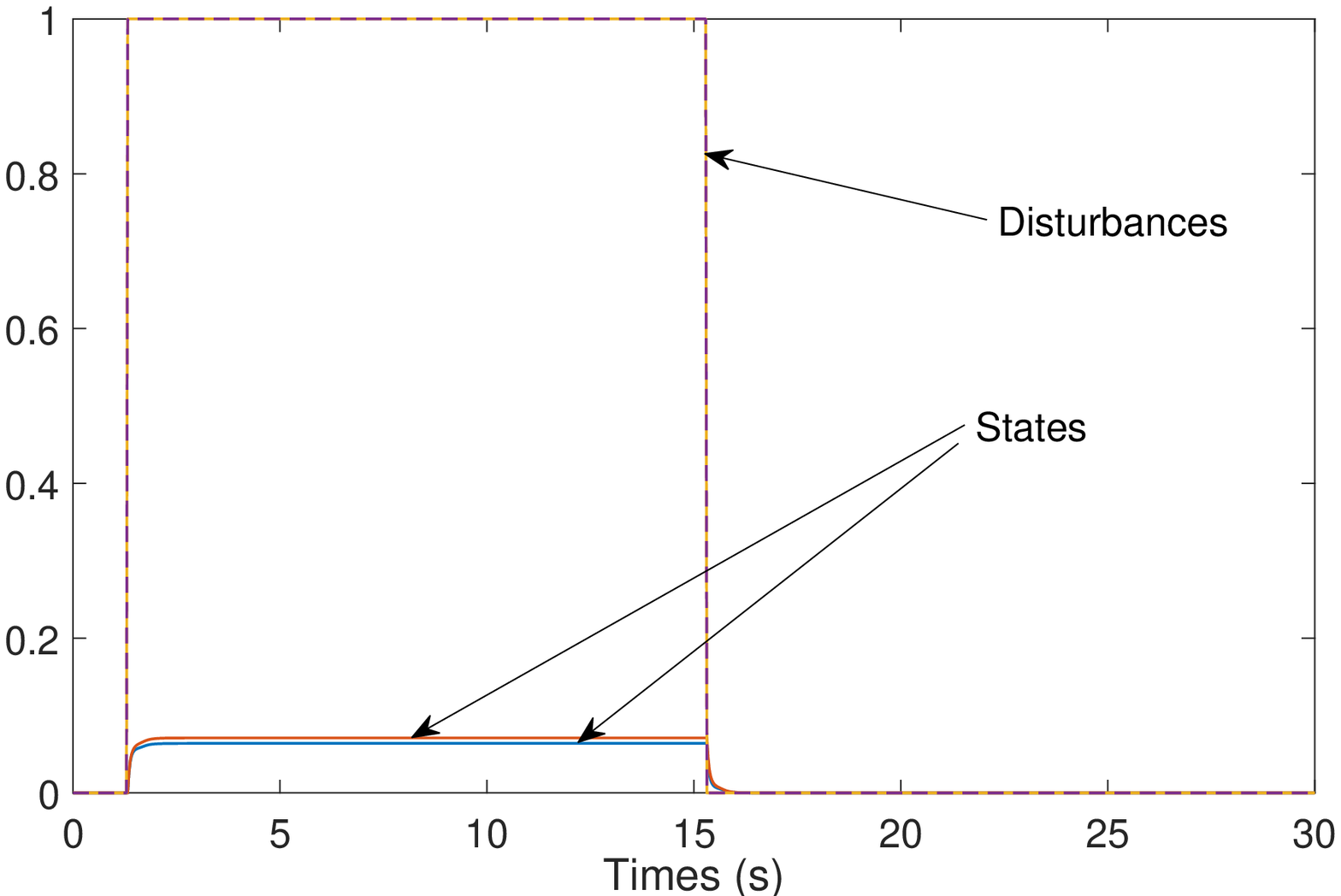}
	\caption{State trajectory $x(t)$ in closed-loop response for a step-like disturbance for Example 2}
\end{minipage}
\quad
\begin{minipage}[t]{0.31\linewidth}
\centering
	 \includegraphics[height=1.5in,width=2.4in]{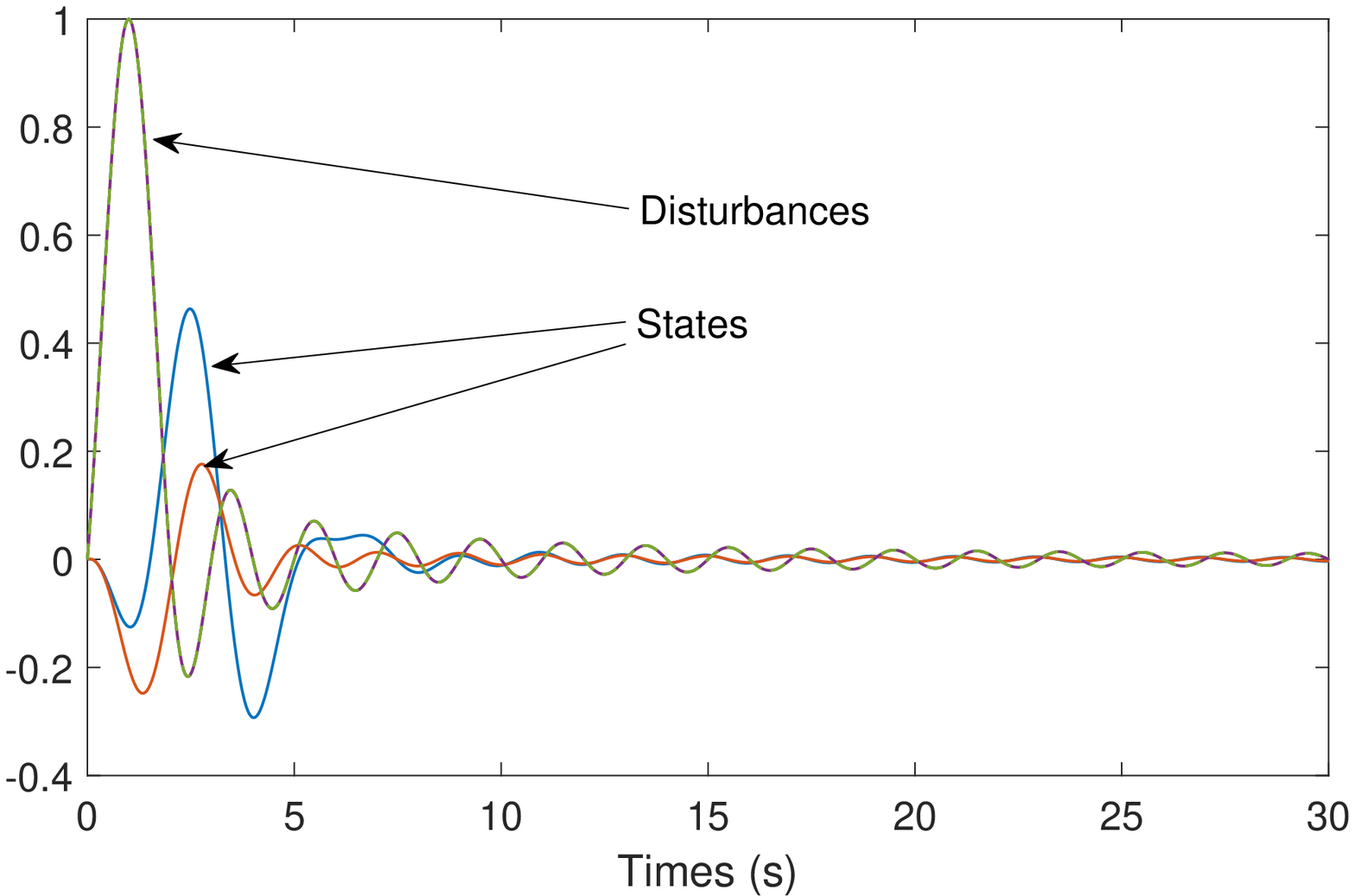}
	\caption{State trajectory $x(t)$ in closed-loop response for a sinc disturbance for Example 3}
\end{minipage}
\end{figure*}

These three numerical examples are used to validate and test the accuracy of the algorithm defined in Theorem~\ref{thm:main}. In each instance, we find a state feedback controller, an observer, and construct observer-based controller. In Table~\ref{tab:tab1}, we find the $\gamma_1,\gamma_2$ obtained from Theorem 7 as  compared to an H$_\infty$ optimal output feedback controller obtained by using a 10th order Pad$\acute{\text{e}}$ approximation of the delay terms in Table 1. We also give a lower bound on the real $L_2$ gain $\gamma_{\text{real}}$ by numerically simulating the effect of a disturbance $\omega(t)$ on the $L_2$-norm of the regulated output $z(t)$ and comparing to the $L_2$-norm of the input. The closed-loop dynamics are validated in Figs. 1-6 where we see the estimator-based controller is effective in stabilization of systems that are open-loop unstable.

\section{Conclusion}
In this paper, we have proposed a method for designing estimator-based output feedback controllers for systems with multiple delays. This approach combines an $H_\infty$-optimal estimator with an $H_\infty$-optimal full-state feedback controller and proves a bound on the $L_2$-norm of the resulting dynamics. These controllers are applicable to systems with multiple known delays and consider process noise, but not sensor noise. Furthermore, we have developed an efficient numerical implementation of the observer-based controller and have posted this implementation online. Numerical examples indicate that the $L_2$-gain of the resulting estimator-based controllers is relatively close to, but does not exactly achieve the minimum possible closed-loop $L_2$-gain as estimated using a Pad\'e approximation.

\vspace{-.3cm}
\begin{table}
\fontsize{4.5}{9}\selectfont
\begin{tabular}{c|c|c|c|c|c|c|c|c|c}
\hline
& \multicolumn{3}{|c|}{Example 1} & \multicolumn{3}{|c|}{Example 2} &
\multicolumn{3}{|c}{Example 3} \\ \cline{2-10}\cline{5-9}
& d=1 & d=2 & d=4 & d=1 & d=2 & d=4 & d=1 & d=2 & d=4 \\ \hline
$\gamma _{1\text{min} }$ & 1.9082 & 1.6829 & 1.6359 & 0.1067 & 0.1060 & 0.1059 &
1.049 & 0.9892 & 0.9596 \\ \hline
$\gamma _{2\text{min} }$ & 4.1485 & 4.1425 & 4.1425 & 0.1325 & 0.1325 & 0.1325 &
1.4862 & 1.4851 & 1.4851 \\ \hline
$\gamma _{\text{min} }$ & \multicolumn{3}{|c|}{3.0450} & \multicolumn{3}{|c|}{
0.1104} & \multicolumn{3}{|c}{1.3499} \\ \hline
$\gamma_{\text{real}}$ & \multicolumn{3}{|c|}{0.7893} &
\multicolumn{3}{|c|}{0.0738} & \multicolumn{3}{|c}{0.6080} \\ \hline
\end{tabular}
\caption{
In this table, $\gamma_{1\text{min}}$ and $\gamma_{2\text{min}}$ are the values $\gamma_1,\gamma_2$ in Theorem~\ref{thm:main}. $\gamma_{\text{min}}$ is the calculated minimized $L_2$ gain bound on the effect of the disturbance $\omega(t)$ on the regulated output $z(t)$ of the closed-loop system (2) under $H_\infty$ output feedback control using a 10th order Pad$\acute{\text{e}}$ approximation of the delay terms. $\gamma_{\text{real}}$ is the real $L_2$ gain on the effect of the disturbance $\omega(t)$ on the regulated output $z(t)$ of the numerical examples under the estimator-based controller we construct.}\label{tab:tab1}
\end{table}

\vspace{0.2cm}

\section*{\large{Appendix}}
In this appendix, we define the mappings $\mcl L_1$ and $\mcl L_2$ as used in Theorem~\ref{thm:main}.

\textbf{Operator $\mcl L_1$:} We say
\begin{align*}
&\{E_1, F_{1i}, {N}_{1i}, G_{1ij}\}\\
&\qquad = \mcl L_{1}(\{P_1,Q_{1i}, S_{1i},R_{1ij}\}, \{H_0, H_{1i},H_{2i}\})
\end{align*}
if

{\small{
\begin{align*}
&E_1=\\
&\bmat{-\frac{\gamma_1}{\tau_K} I &\frac{1}{\tau_K}D_1&E_{11} &E_{121} &\hdots &E_{12K}\\
  *^T&-\frac{\gamma_1}{\tau_K} I &B_1^T&0&\hdots&0\\
  *^T&*^T&E_{10}+E_{10}^T&E_{131} &\hdots &E_{13K}\\
  *^T&*^T&*^T&-S_{11}(-\tau_1)&\hdots &0\\
  \vdots &\vdots&\vdots&\vdots&\ddots &\vdots\\
  *^T&*^T&*^T&*^T&\hdots &-S_{1k}(-\tau_K)}\\
  &F_{1i}(s)\\
&=\frac{1}{\tau_K}\cdot\\
&\bmat{C_{10}Q_{1i}(s)+\sum_j C_{1j} R_{1ji}(-\tau_j,s)\\0\\ \tau_K\left(A_0 Q_{1i}(s) +\dot Q_{1i}(s)+\sum_{j=1}^K A_j R_{1ji}(-\tau_j,s)+B_2H_{2i}(s)\right)  \\ 0 \\ \vdots \\0}\\
&N_{1i}(s)=\dot{S}_{1i}(s)\\
&G_{1ij}(s,\theta)\\
&=\frac{\partial}{\partial s}R_{1ij}(s,\theta)+\frac{\partial}{\partial \theta}R_{1ji}(s,\theta)^T, \quad i,j \in [K]
  \end{align*}}}
  where

{\small{
\begin{align*}
&E_{10}= A_0 P_1+ \sum_{i=1}^K \left(  \tau_K A_i Q_{1i}(-\tau_i)^T  + \frac{1}{2}S_{1i}(0)\right)+B_2H_0\\
&E_{11}=\frac{1}{\tau_K}C_{10}P_1+\sum_i C_{1i} Q_{1i}(-\tau_i)^T\\
&E_{12i}=C_{1i} S_{1i}(-\tau_i)\\
&E_{13i}=\tau_K A_iS_{1i}(-\tau_i)+B_2H_{1i}
\end{align*}}}

\textbf{Operator $\mcl L_2$:} We say

\begin{align*}
&\{E_2, F_{2i}, {H}_{2i}, G_{2ij}\} \\
&\qquad= \mcl L_{2}(\{P_1,Q_{2i}, S_{2i},R_{2ij}\}, \{Z_0, Z_{1i},Z_{2i},\cdots,\})
\end{align*}
if
{\small{
\begin{align*}
&E_2:=\mcl{L}_{5}(P_2, Q_{2i}, S_{2i}, Z_1, Z_{2i})\\
&=\bmat{
-\frac{\gamma_2}{\tau_K}I &D_3^T&-E_{20}^T&0&\hdots &0\\
  *^T&-\frac{\gamma_2}{\tau_K} I &\frac{C_{10}}{\tau_K}&\frac{C_{11}}{\tau_K}&\hdots&\frac{C_{1K}}{\tau_K}\\
  *^T&*^T&E_{210}&E_{211} &\hdots &E_{21K}\\
  *^T&*^T&*^T&-S_{21}(-\tau_1)&\hdots &0\\
  \vdots &\vdots&\vdots&\vdots&\ddots &\vdots\\
  *^T&*^T&*^T&*^T&\hdots &-S_{2k}(-\tau_K)}\\
 &F_{2i}(s):=\mcl{L}_{6}(Q_{2i}, R_{2ij}, Z_{3i}, Z_{4i}, Z_{5ij})\\
&=\bmat{-Q_{2i}^T(s)B_1&0&F_{20i}(s)&F_{21i}(s)&\hdots&F_{2Ki}(s)}^T
\\
&N_{2i}(s):=\mcl L_7(S_{2i}, Z_{6i})=\dot{S}_{2i}(s)+Z_{6i}(s)C_2+C_2^TZ^T_{6i}(s)\\
&G_{2ij}(s, \theta):=\mcl L_8(R_{2ij})=-\frac{\partial}{\partial s}R_{2ij}(s,\theta)-\frac{\partial}{\partial \theta}R_{2ij}(s,\theta)\\
&\quad+\tau_K(Z_{7ij}(s, \theta)C_2+C_2^TZ_{7ji}^T(\theta, s))
  \end{align*}}}
where
{\small{
\begin{align*}
&E_{20}:= P_2B_1\\
&E_{210}:=P_2A_0+A_0^TP_2+\sum_kQ_{2k}(0)+Q_{2k}^T(0)+S_{2k}(0)\\
&\quad+Z_1C_2+C_2^TZ_1^T\\
&E_{21i}=P_2A_i-Q_{2i}(-\tau_i)+Z_{2i}C_2\\
&F_{20i}(s)=A_0^TQ_{2i}(s)+\frac{1}{\tau_K}\sum_kR_{2ik}^T(s,0)-\dot{Q}_{2i}(s)\\
&\quad+Z_{4i}(s)C_2+C_2^TZ^T_{3,i}(s)\\
&F_{2ji}(s)=A_j^TQ_{2i}(s)+\frac{1}{\tau_K}R_{2ij}^T(s,-\tau_j)+Z_{5ij}(s)C_2.
\end{align*}}}

\end{document}